\newtheorem{thm}{Theorem}
\newtheorem{defn}[thm]{Definition}
\newtheorem{prp}{Proposition}
\newtheorem*{claim}{Claim}
\newtheorem{lemma}{Lemma}
\title{\textbf{Kakeya-type sets for Geometric Maximal Operators}}
\author{Anthony Gauvan\footnote{Institut Mathématiques d'Orsay, Facultés des Sciences, 91400 Orsay}}
\begin{document}

\maketitle

\begin{abstract}
In this text we establish an \textit{a priori} estimate for arbitrary \textit{geometric maximal operator} in the plane. Precisely we associate to any family of rectangles $\mathcal{B}$ a geometric quantity $\lambda_{[\mathcal{B}]}$ called its \textit{analytic split} and satisfying $\log( \lambda_{[\mathcal{B}]}) \lesssim_p \| M_{\mathcal{B}} \|_p^p$ for all $1 < p < \infty$, where $M_\mathcal{B}$ is the Hardy-Littlewood type maximal operator associated to the family $\mathcal{B}$. We give then two applications in order to illustrate it. To begin with, this estimate allows us to classify the $L^p(\mathbb{R}^2)$ behavior of \textit{rarefied directional bases}. As a second application, we prove that the basis $\mathcal{B}$ generated by rectangle whose eccentricity and orientation are of the form $$\left( e_r, \omega_r \right) = \left( \frac{1}{n} , \sin(n) \frac{\pi}{4} \right) $$ for some $n \in \mathbb{N}$, yields a geometric maximal operator $M_\mathcal{B}$ which is unbounded on $L^p(\mathbb{R}^2)$ for any $1 < p < \infty$.
\end{abstract}

\section{Introduction}

In \cite{BATEMANKATZ}, Bateman and Katz developed a powerful method to study the directional maximal operator associated to a Cantor set of directions. In particular they proved that this operator is unbounded on $L^p(\mathbb{R}^2)$ for any $1 \leq p < \infty$. Then in \cite{BATEMAN} - proving the converse of a result due to Alfonseca \cite{ALFONSECA} and developing further the ideas in \cite{BATEMANKATZ} - Bateman classified the $L^p(\mathbb{R}^2)$ behavior of any directional maximal operator in the plane : he proved that a directional maximal operator is either bounded on $L^p(\mathbb{R}^2)$ for any $1 < p < \infty$ or either unbounded for any $1 < p < \infty$. In this text, we pursue the programm initiated in \cite{GAUVAN} which consists in studying \textit{geometric maximal operators} which are not \textit{directional}. It appears than geometric maximal operators are more general than directional maximal operators and their study requires to focus on the interactions between the coupling \textit{eccentricity/orientation} for a family of rectangles. Our main result is the construction of so-called \textit{Kakeya-type sets} for an arbitrary geometric maximal operator which gives an \textit{a priori} bound on their $L^p(\mathbb{R}^2)$-norm in the same spirit than in \cite{BATEMAN} ; we will derive two applications of this estimate to illustrate it.

\subsection*{Definitions}

We work in the euclidean plane $\mathbb{R}^2$ ; if $u$ is a measurable subset we denote by $|u|$ its Lebesgue measure. We denote by $\mathcal{R}$ the collection containing all rectangles of $\mathbb{R}^2$ ; for $r \in \mathcal{R}$ we define its \textit{orientation} as  the angle $\omega_r \in [0,\pi) $ that its longest side makes with the $x$-axis and its \textit{eccentricity} as the ratio $e_r \in (0,1]$ of its shortest side by its longest side.

For an arbitrary non empty family $\mathcal{B}$ contained in $\mathcal{R}$, we define the associated \textit{derivation basis} $\mathcal{B}^*$ by $$\mathcal{B}^* = \left\{ \Vec{t} + h r : \Vec{t} \in \mathbb{R}^2, h>0, r \in \mathcal{B} \right\}.$$ The derivation basis $\mathcal{B}^*$ is simply the smallest collection which is invariant by dilation and translation and that contains $\mathcal{B}$. Without loss of generality, we identify the derivation basis $\mathcal{B}^*$ and any of its generator $\mathcal{B}$.

Our object of interest will be the \textit{geometric maximal operator $M_\mathcal{B}$ generated by $\mathcal{B}$} which is defined as $$M_\mathcal{B}f(x) := \sup_{x \in r \in \mathcal{B}^*}  \frac{1}{|r|} \int_r |f|$$ for any $f \in L_{loc}^{1}(\mathbb{R}^2)$ and $x \in \mathbb{R}^2$. \textbf{Observe that the upper bound is taken on elements of $\mathcal{B}^*$ that contain the point $x$}. The definitions of $\mathcal{B}^*$ and $M_\mathcal{B}$ remain valid when we consider that $\mathcal{B}$ is an arbitrary family composed of open bounded convex sets. For example in this note, for technical reasons and without loss of generality, we will work at some point with parallelograms instead of rectangles.

For $p \in (1, \infty]$ we define as usual the operator norm $\|M_\mathcal{B}\|_p$ of $M_\mathcal{B}$ by $$ \|M_\mathcal{B}\|_p = \sup_{ \|f\|_p = 1} \|M_\mathcal{B}f\|_p.$$ If $\|M_\mathcal{B}\|_p < \infty$ we say that $M_\mathcal{B}$ is \textit{bounded} on $L^p(\mathbb{R}^2)$.  The boundedness of a maximal operator $M_\mathcal{B}$ is related to the geometry that the family $\mathcal{B}$ exhibits. 

\begin{defn}
We will say that the operator $M_\mathcal{B}$ is a \textbf{good operator} when it is bounded on $L^p(\mathbb{R}^2)$ for any $ p > 1$. On the other hand, we say that the operator $M_\mathcal{B}$ is a \textbf{bad operator} when it is unbounded on $L^p(\mathbb{R}^2)$ for any $1 < p < \infty$.
\end{defn}

On the $L^p(\mathbb{R}^2)$ range, to be able to say that a operator $M_\mathcal{B}$ is good or bad is an optimal result. We are going to see that a certain type of geometric maximal operators, namely directional maximal operators, are known to be either good or bad.

\subsection*{Directional maximal operators}

A lot of researches have been done in the case where $\mathcal{B}$ is equal to $\mathcal{R}_\Omega :=  \left\{ r \in \mathcal{R} : \omega_r \in \Omega \right\}$ where $\Omega$ is an arbitrary set of directions in $[0,\pi)$. In other words, $\mathcal{R}_\Omega$ is the set of \textit{all} rectangles whose orientation belongs to $\Omega$. We say that $\mathcal{R}_\Omega$ is a \textit{directional basis} and to alleviate the notation we denote $$M_{\mathcal{R}_\Omega} := M_\Omega.$$ In the literature, the operator $M_\Omega$ is said to be a \textit{directional maximal operator}. The study of those operators goes back at least to Cordoba and Fefferman's article \cite{CORDOBAFEFFERMAN II} in which they use geometric techniques to show that if $\Omega = \left\{ \frac{\pi}{2^k} \right\}_{ k \geq 1}$ then $M_\Omega$ has weak-type $(2,2)$. A year later, using Fourier analysis techniques, Nagel, Stein and Wainger proved in \cite{NSW} that $M_\Omega$ is actually bounded on $L^p(\mathbb{R}^2)$ for any $p > 1$. In \cite{ALFONSECA}, Alfonseca has proved that if the set of direction $\Omega$ is a \textit{lacunary set of finite order} then the operator $M_\Omega$ is bounded on $L^p(\mathbb{R}^2)$ for any $p > 1$. Finally in \cite{BATEMAN}, Bateman proved the converse and so characterized the $L^p(\mathbb{R}^2)$-boundedness of directional operators. Precisely he proved the following Theorem.

\begin{thm}[Bateman]\label{ T : bateman }
Fix an arbitrary set of directions $\Omega \subset [0,\pi)$. The directional maximal operator $M_\Omega$ is either good or bad.
\end{thm}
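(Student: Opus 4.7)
The plan is to set up a combinatorial dichotomy on $\Omega$ based on the notion of lacunarity of finite order. Roughly, $\Omega$ is lacunary of order $1$ if its directions admit an enumeration whose tangents form a lacunary sequence, and inductively of order $N$ if, in the gaps between consecutive terms of a lacunary sequence covering it, it is lacunary of order $N-1$. Assign to $\Omega$ its minimal such order, possibly $+\infty$; the two cases will produce the two conclusions of the theorem. If $\Omega$ has finite order, Alfonseca's theorem from \cite{ALFONSECA} yields $\|M_\Omega\|_p < \infty$ for every $p \in (1,\infty)$, and $M_\Omega$ is good.

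The substantive half is the case when $\Omega$ has infinite lacunary order, and the target is to show $M_\Omega$ is bad. Following \cite{BATEMANKATZ}, the strategy is to construct, for each integer $N \geq 1$, a Kakeya-type configuration inside $\mathcal{R}_\Omega$: a finite subfamily $\mathcal{B}_N$ together with ``shifted'' companions $\tilde{r}$ of each $r \in \mathcal{B}_N$ such that, setting $E_N = \bigcup_{r \in \mathcal{B}_N} r$ and $F_N = \bigcup_{r \in \mathcal{B}_N} \tilde{r}$, one has $|F_N| \gtrsim 1$ while $|E_N| \to 0$ as $N \to \infty$, and moreover $M_\Omega \mathbb{1}_{E_N} \gtrsim 1$ pointwise on $F_N$ (because the doubled rectangle $r \cup \tilde{r}$ belongs essentially to $\mathcal{R}_\Omega$ and contains $E_N$ with density bounded below). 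Testing the operator against $\mathbb{1}_{E_N}$ then yields
$$\|M_\Omega\|_p \gtrsim \left(\frac{|F_N|}{|E_N|}\right)^{1/p} \xrightarrow[N \to \infty]{} \infty$$
simultaneously for every $p \in (1,\infty)$, whence $M_\Omega$ is bad.

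The main obstacle is the construction of $\mathcal{B}_N$ when $\Omega$ is an arbitrary infinite-order set: the sticky-tree machinery of \cite{BATEMANKATZ} is tailored to sets of directions arranged in a self-similar Cantor structure of depth $N$, whereas a generic $\Omega$ of infinite lacunary order carries no such a priori self-similarity. The key lemma to prove is therefore an extraction statement: infinite lacunary order forces, for every $N$, the existence inside $\Omega$ of a depth-$N$ Cantor-type sub-tree of directions on which the Bateman--Katz construction can be run verbatim. This extraction would proceed scale-by-scale, harvesting a ``splitting'' direction in $\Omega$ at each scale where the order-$N$ lacunarity hypothesis fails; iterating the harvesting $N$ times produces the required embedded tree, which in turn feeds the Kakeya-type construction and closes the argument.
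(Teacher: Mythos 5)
This statement is Bateman's theorem, which the paper quotes from \cite{BATEMAN} and does not prove (it only recalls the ingredients of Bateman's construction later, in Sections 5--6); so your proposal has to stand on its own, and as it stands it is an outline of the known strategy rather than a proof. The two places where all the difficulty lives are exactly the places you defer. First, the dichotomy is drawn in the wrong place: the correct dividing line (stated in the paper itself) is whether $\Omega$ can be \emph{covered by a finite union} of lacunary sets of finite order, not whether $\Omega$ itself has finite lacunary order. On the positive side this matters because Alfonseca's theorem handles finite unions only through her almost-orthogonality principle, which you do not invoke; on the negative side it matters more, because your ``infinite minimal order'' case may contain sets that are finite unions of finite-order lacunary sets, hence \emph{good}, so the implication you set out to prove there is false as stated. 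Your extraction lemma is keyed to the failure of order-$N$ lacunarity of $\Omega$ itself (``harvesting a splitting direction at each scale where the order-$N$ lacunarity hypothesis fails''), whereas what must be shown is that failure of coverability by finitely many finite-order lacunary sets produces, for every $N$, a subtree of directions with uniformly large splitting number; this combinatorial reduction is a substantial part of Bateman's argument, not a routine harvesting.

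Second, even granting the extracted depth-$N$ tree, the assertion that the Bateman--Katz construction ``can be run verbatim'' to give $|F_N| \gtrsim 1$ while $|E_N| \to 0$ is precisely the deep content of \cite{BATEMANKATZ} and \cite{BATEMAN}: one must build the random sticky family of rectangles and prove the quantitative gain (in the paper's notation, $\mathbb{P}\bigl(|A_2| \geq \tfrac{\log n}{C}|A_1|\bigr) > 0$), which rests on fine geometric estimates, stickiness, and percolation-type arguments, and the gain is only logarithmic in the depth. Your sketch states the desired conclusion of that machinery ($M_\Omega \mathbb{1}_{E_N} \gtrsim 1$ on $F_N$ with $|F_N|/|E_N| \to \infty$) but supplies no argument for it. So the overall architecture you describe is the right one --- Alfonseca (with almost-orthogonality) for the bounded half, Kakeya-type sets built from a splitting tree for the unbounded half --- but both the structural lemma and the probabilistic estimate are missing, and the case split needs to be reformulated in terms of finite unions (equivalently, Bateman's finite-split condition) before the unbounded half is even a true statement.
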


We invite the reader to look at \cite{BATEMAN} for more details and also \cite{BATEMANKATZ} where Bateman and Katz introduced their method. Hence we know that a set of directions $\Omega$ always yields a directional operator $M_\Omega$ that is either good or bad. Merging the vocabulary, we use the following definition.

\begin{defn}
We say that a set of directions $\Omega$ is a good set of directions when $M_\Omega$ is good and that it is a  bad set of directions when $M_\Omega$ is bad.
\end{defn}

The notion of good/bad is perfectly understood for a set of directions $\Omega$ and the associated directional operator $M_\Omega$. To say it bluntly, $\Omega$ is a good set of directions if and only if it can be included in a finite union of lacunary sets of finite order. If this is not possible, then $\Omega$ is a bad set of directions ; see \cite{BATEMAN}. We now turn attention to maximal operator which are not directional.

\subsection*{Geometric maximal operators}

\textbf{In this text, we will focus on geometric maximal operator which are not directional}. In \cite{GAUVAN}, we have considered the following type of basis : for $a,b > 0$ arbitrary, denote by $\mathcal{B}_{a,b}$ the basis generated by rectangles $r$ whose eccentricity and orientation are of the form $$\left(e_r , \omega_r \right) = \left( \frac{1}{n^a}, \frac{\pi}{4n^b} \right)$$ for some $n \in \mathbb{N}^*$. Obviously the basis $\mathcal{B}_{a,b}$ is not a directional basis ; denoting by $M_{a,b}$ the geometric maximal operator associated we proved the following Theorem.

\begin{thm}[Gauvan]
If $a \leq b$ then $M_{a,b}$ is a good operator. If not then $M_{a,b}$ is a bad operator.
\end{thm}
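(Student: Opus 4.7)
The plan is to handle the two regimes separately, with the good case following from a dyadic block decomposition onto a lacunary directional basis, and the bad case requiring a Perron / Kakeya--type construction that respects the rigid eccentricity/orientation coupling.

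For the good case ($a \leq b$), first I would partition $\mathbb{N}^*$ into dyadic blocks $I_k = [2^k, 2^{k+1})$. For $n \in I_k$ the eccentricity $1/n^a$ is comparable to $2^{-ka}$, while the orientations $\pi/(4n^b)$ sweep through an arc of length $\sim 2^{-kb}$. Under the hypothesis $a \leq b$ this angular spread is dominated by the eccentricity, so every rectangle whose parameters are indexed in $I_k$ is contained in --- and has measure comparable to --- a single fixed rectangle $R_k$ of eccentricity $\sim 2^{-ka}$ and orientation $\theta_k := \pi/(4\cdot 2^{kb})$. Taking sup over $k$ and translating/dilating $R_k$ this gives the pointwise bound $M_{a,b}f \lesssim M_{\Omega^*}f$, where $\Omega^* = \{\theta_k : k \geq 0\}$ is a lacunary sequence. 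By Nagel--Stein--Wainger, $M_{\Omega^*}$ is bounded on $L^p(\mathbb{R}^2)$ for every $p > 1$, and the good case follows.

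For the bad case ($a > b$), the angular spread $2^{-kb}$ inside a block $I_k$ now strictly exceeds the eccentricity $2^{-ka}$, so the block-collapse trick fails and one has many genuinely distinct thin directions available. The idea is to use the $\sim 2^k$ rectangles in block $I_k$ as the building blocks of a Perron / Kakeya--type construction: they are tubes of eccentricity $2^{-ka}$ pointing in $2^k$ distinct directions spread over an arc of length $2^{-kb}$ with angular spacing $\sim 2^{-k(b+1)}$. Following the sticky-map strategy of Bateman--Katz adapted to this geometry, I would translate these tubes so that their union has Lebesgue measure far smaller than the sum of the measures of their fixed-ratio dilates, and then run the standard duality test against the indicator of that union to conclude $\|M_{a,b}\|_p = \infty$ for every $p \in (1,\infty)$.

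The good case is essentially mechanical once the dyadic block decomposition is in place. The real difficulty is the bad case: one must show that a Perron-type rearrangement can actually be carried out while respecting the rigid prescription $(e_r, \omega_r) = (1/n^a, \pi/(4n^b))$, so that the required quantitative overlap between thin tubes inside each block is achievable in spite of the coupling constraint. In the language of the present paper this step amounts to bounding the analytic split $\lambda_{[\mathcal{B}_{a,b}]}$ from below and showing that it is infinite when $a > b$; combined with the \emph{a priori} estimate $\log \lambda_{[\mathcal{B}]} \lesssim_p \|M_\mathcal{B}\|_p^p$, this lower bound would constitute the technical heart of the argument.
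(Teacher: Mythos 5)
Your positive half ($a \leq b$) is fine and is essentially the standard argument (and the one behind the cited result): inside a dyadic block the angular spread $\sim 2^{-kb}$ is at most the eccentricity $\sim 2^{-ka}$, so each block collapses into a single rectangle of comparable measure with orientation $\theta_k \sim 2^{-kb}$, giving $M_{a,b} \lesssim M_{\Omega^*}$ with $\Omega^*$ lacunary and hence bounded by Nagel--Stein--Wainger. Note that the present paper does not reprove this theorem; it is quoted from the earlier work, where the negative half is obtained by geometric estimates built on the generalized Perron trees of Hare and R\"onning, while your proposal for the negative half goes instead through a Bateman--Katz type random construction, or equivalently through the analytic-split machinery of this paper (exactly the pattern of the proof of Theorem \ref{T :app2}). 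That alternative route is legitimate in principle, but as written it is only a plan.

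The genuine gap is in the bad case, and it is not merely a deferred computation: the counting you propose does not yet see the hypothesis $a>b$. You say block $I_k$ provides $2^k$ tubes of eccentricity $2^{-ka}$ with angular spacing $\sim 2^{-k(b+1)}$; but whenever $a < b+1$ this spacing is far smaller than the eccentricity, so consecutive tubes are indistinguishable at their own scale and contribute nothing to a Perron or Kakeya-type gain. The quantity that matters is the number of directions in the block that are separated by at least the eccentricity, namely $\sim 2^{-kb}/2^{-ka} = 2^{k(a-b)}$, and it is precisely the condition $a>b$ that makes this tend to infinity; as stated, your count of ``$2^k$ distinct directions'' is available for every $a,b$ and would equally ``apply'' when $a \leq b$, contradicting the positive half you just proved. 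Concretely, to run the route you sketch via Theorem \ref{ T : main theorem} you must exhibit fig trees of arbitrary scale inside $[\mathcal{B}_{a,b}]$: for a target scale $N$ one takes a dyadic angular interval of generation $\approx kb$ inside the range swept by block $I_k$, subdivides it into $2^N$ dyadic subintervals of generation $kb+N$, and needs each subinterval to contain some $\omega_{r_n}$ with $e_{r_n} \leq 2^{-(kb+N)}$; the spacing condition forces $N \lesssim k$ and the eccentricity condition forces $N \lesssim k(a-b)$, so choosing $k \gtrsim N\max\{1,(a-b)^{-1}\}$ works, and only then does $\lambda_{[\mathcal{B}_{a,b}]}=\infty$ follow and with it unboundedness on every $L^p$. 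This verification (or, in the original approach, the construction of a generalized Perron tree compatible with the coupling $(1/n^a,\pi/(4n^b))$) is the heart of the theorem, and your proposal acknowledges but does not supply it.
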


To prove this Theorem, we developed geometric estimates in order to fully exploit \textit{generalized Perron trees} as constructed in \cite{KATHRYN JAN} by Hare and Rönning. However, it appears that generalized Perron trees are \textit{ad hoc} constructions that can only made in specific situations.

\subsection*{Results}

Our main result is an \textit{a priori} estimate in the same spirit than one of the main result of \cite{BATEMAN}. Precisely, to any family $\mathcal{B}$ contained in $\mathcal{R}$ we associate a geometric quantity $\lambda_{[\mathcal{B}]} \in \mathbb{N} \cup \left\{\infty \right\}$ that we call \textit{analytic split} of $\mathcal{B}$. Loosely speaking, the analytic split $ \lambda_{[\mathcal{B}]}$ indicates if $\mathcal{B}$ contains a lot of rectangles in terms of \textit{orientation} and \textit{eccentricity}. We prove then the following Theorem.

\begin{thm}\label{ T : main theorem}
For any family $\mathcal{B}$ and any $1 < p < \infty$ we have $$A_p \times \log(\lambda_{[\mathcal{B}]}) \leq \| M_\mathcal{B} \|_p^p$$ where $A_p$ is a constant only depending on $p$.
\end{thm}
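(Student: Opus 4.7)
The plan is to follow the Perron/Bateman template: given that $\lambda_{[\mathcal{B}]}$ is large I would extract from $\mathcal{B}^\ast$ a finite Kakeya-type configuration of rectangles and test $M_\mathcal{B}$ against the indicator of their union. Since both sides of the inequality are monotone in $\mathcal{B}$ and $\lambda_{[\mathcal{B}]}$ takes values in $\mathbb{N}\cup\{\infty\}$, I first reduce to the case $\lambda_{[\mathcal{B}]}=N$ for a fixed positive integer $N$ and work with a finite sub-family realizing that value; passing $N\to\infty$ at the end will handle the infinite case.

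Next, unwinding the definition of the analytic split, I aim to extract a collection $r_1,\dots,r_M\in\mathcal{B}^\ast$ organised by a combinatorial splitting of depth $\gtrsim\log N$. By translating each $r_i$ along its own long side by one length I produce twin rectangles $\tilde{r}_i$, and I arrange that the union
\[
E \;=\; \bigcup_{i} r_i
\]
satisfies $|E|\lesssim 1/\log N$ while the shifted collection fills $\bigl|\bigcup_{i}\tilde{r}_i\bigr|\gtrsim 1$. Since each $\tilde{r}_i$ is contained in an element $R_i\in\mathcal{B}^\ast$ (a suitable dilate of $r_i$) with at least half of its mass inside $E$, one has $M_\mathcal{B}\mathbf{1}_E\geq 1/2$ on $\bigcup_{i}\tilde{r}_i$. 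Testing $M_\mathcal{B}$ against $f=\mathbf{1}_E$ then gives
\[
\|M_\mathcal{B}\|_p^p \;\geq\; \frac{\|M_\mathcal{B}\mathbf{1}_E\|_p^p}{\|\mathbf{1}_E\|_p^p} \;\gtrsim\; \frac{(1/2)^p \cdot 1}{1/\log N} \;\gtrsim_p\; \log N ,
\]
which is the desired estimate up to the multiplicative constant $A_p$.

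The decisive step, and the main obstacle, is the geometric realisation of this Kakeya-type configuration. In the directional setting treated by Bateman in \cite{BATEMAN}, every rectangle appearing in a splitting tree shares a common shape, so the stick-out reduces to a planar statement about directions alone. For an arbitrary geometric basis $\mathcal{B}$ the rectangles at distinct levels of the splitting tree will typically differ in orientation \emph{and} eccentricity simultaneously, and the pairwise overlap estimates controlling $|E|$ must be derived under this joint variation. I expect the analytic split $\lambda_{[\mathcal{B}]}$ to be precisely the combinatorial invariant calibrated so that the coupled stick-out still produces a logarithmic gain, generalising the depth of Bateman's splitting tree to the eccentricity/orientation coupling considered here; making this coupling quantitative, presumably by an inductive splitting argument that descends one level of the tree at a time, is where the work of the proof will concentrate.
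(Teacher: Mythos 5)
Your proposal sets up the right outer template --- reduce to a finite subfamily realizing the analytic split, build a Kakeya-type configuration, shift by one length along the orientation, and test $M_\mathcal{B}$ on an indicator, converting a level-$(\eta,\epsilon)$ Kakeya-type set into $\|M_\mathcal{B}\|_p^p\gtrsim_p\log N$ --- and this matches how the paper finishes. But the decisive step, which you explicitly defer (``where the work of the proof will concentrate''), is exactly what the proof must supply, so there is a genuine gap. Worse, the specific shape you propose for that step is not viable: you want the configuration $E=\bigcup_i r_i$ to consist of elements of $\mathcal{B}^\ast$ themselves, with $|E|\lesssim 1/\log N$ and $\bigl|\bigcup_i\tilde r_i\bigr|\gtrsim 1$. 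The analytic split only controls the tree $[\mathcal{B}]$ \emph{generated} by $\mathcal{B}$: below each leaf of a fig tree there may sit a single element of $\mathcal{B}$, of arbitrarily small eccentricity, placed anywhere inside that leaf. Such slivers can be pairwise disjoint, in which case $\bigl|\bigcup_i r_i\bigr|\approx\sum_i|r_i|\approx\bigl|\bigcup_i\tilde r_i\bigr|$ and no compression of order $\log N$ can be extracted from a union of $\mathcal{B}^\ast$-elements; isotropic dilation cannot repair this, since it preserves eccentricity. So a Perron-type compression ``for the basis elements under joint variation of eccentricity and orientation'' is not merely hard to quantify --- in the form you state it, it fails for general $\mathcal{B}$.

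The paper's route sidesteps both problems, and this is the idea your sketch is missing. Bateman's probabilistic construction is invoked as a black box on the fig tree $[\mathcal{F}]\subset[\mathcal{B}]$ itself: its leaves are pulled-out parallelograms of a common dyadic shape, so this is effectively the directional setting and no new percolation or stickiness argument is needed; one gets translated leaves with $|A_2|\geq\frac{\log n}{C}|A_1|$. The test set is $A_1$, a union of translates of fig-tree \emph{leaves}, not of elements of $\mathcal{B}$. The elements of $\mathcal{B}$ enter only as averaging sets: for each leaf $u$ one picks $v_u\in\mathcal{B}$ with $v_u\subset u$, and the new content of the paper is the elementary geometric estimates of Section~\ref{S:geometric estimate} (Proposition~\ref{L : final lemma ge} together with the covering-lemma estimate for the shifted parallelograms), which show that averaging $\mathbb{1}_{A_1}$ over a suitable translate and dilate of the thin $v_u$ still exceeds $\frac{1}{16}$ on a parallelogram of area comparable to $|u|$, and that the union of these parallelograms has measure $\gtrsim|A_2|$. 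This transfers the Kakeya-type set from $M_{[\mathcal{F}]}$ to $M_\mathcal{B}$ and yields the theorem; without this transfer mechanism (or a worked-out substitute for your inductive splitting), your argument does not close.
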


An important feature of this inequality is that we do not make any assumption on the family $\mathcal{B}$. Observe that the analytic split of a family $\mathcal{B}$ indicates if the family $\mathcal{B}$ is large \textit{i.e.} if $M_\mathcal{B}$ is an operator with large $L^p(\mathbb{R}^2)$-norms. In regards of the study of geometric maximal operators, Theorem \ref{ T : main theorem} gives a \textit{concrete} and \textit{a priori} lower bound on the $L^p(\mathbb{R}^2)$ norm of $M_\mathcal{B}$. \textbf{We insist on the fact that this estimate is concrete since the analytic split is not an abstract quantity associated to $\mathcal{B}$ but has strong a \textit{geometric interpretation}}. No such results was previously known for geometric maximal operators and we give two applications in order to illustrate it. The following Theorem allows us to classify the $L^p(\mathbb{R}^2)$ behavior of \textit{rarefied directional bases}.

\begin{thm}\label{T:app1}
Fix any bad set of directions $\Omega \subset [0,\frac{\pi}{4})$ and let $\mathcal{B} \subset \mathcal{R}_\Omega$ be a family satisfying for any $\omega \in \Omega$ $$\inf_{ r \in \mathcal{B}, \omega_r = \omega} e_r = 0.$$ In this case the operator $M_\mathcal{B}$ is also a bad operator.
\end{thm}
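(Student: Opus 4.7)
The plan is to apply Theorem \ref{ T : main theorem} with the goal of proving $\lambda_{[\mathcal{B}]}=\infty$. This is sufficient since the theorem then forces $\|M_\mathcal{B}\|_p^p \geq A_p \log(\lambda_{[\mathcal{B}]}) = +\infty$ for every $p \in (1,\infty)$, which is exactly the statement that $M_\mathcal{B}$ is a bad operator.

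First I would use Bateman's classification (Theorem \ref{ T : bateman }) to extract, from the badness of $\Omega$, configurations of rectangles witnessing arbitrarily large analytic split inside the full directional basis $\mathcal{R}_\Omega$. More concretely, combining Theorem \ref{ T : bateman } with Theorem \ref{ T : main theorem} applied to $\mathcal{R}_\Omega$, or directly inspecting the Bateman--Katz split-tree construction, for every $N \in \mathbb{N}$ one produces a finite sub-family $\mathcal{C}_N \subset \mathcal{R}_\Omega$ involving only finitely many directions $\omega_1,\dots,\omega_k \in \Omega$ and such that the associated geometric configuration realises $\lambda_{[\mathcal{C}_N]} \geq N$.

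The next step is to transfer $\mathcal{C}_N$ into $\mathcal{B}^*$. For each orientation $\omega_i$, the hypothesis $\inf_{r \in \mathcal{B},\, \omega_r = \omega_i} e_r = 0$ guarantees a rectangle $r_i \in \mathcal{B}$ of orientation $\omega_i$ whose eccentricity $e_{r_i}$ may be chosen arbitrarily small; since $\mathcal{B}^*$ is closed under translations and positive dilations, every rectangle of orientation $\omega_i$ and eccentricity $e_{r_i}$, at any position and scale, lies in $\mathcal{B}^*$. I would therefore replace each rectangle $R \in \mathcal{C}_N$ of orientation $\omega_i$ by a thinner rectangle $R' \in \mathcal{B}^*$ sharing the long axis of $R$ but having the smaller eccentricity $e_{r_i} \leq e_R$, so that $R' \subset R$; this produces a new configuration $\mathcal{C}_N' \subset \mathcal{B}^*$.

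The main difficulty is to certify that this thinning procedure $\mathcal{C}_N \mapsto \mathcal{C}_N'$ preserves the analytic split, namely $\lambda_{[\mathcal{C}_N']} \geq \lambda_{[\mathcal{C}_N]}$. The geometric interpretation of $\lambda_{[\cdot]}$ emphasised by the author makes this monotonicity natural: a Kakeya-type configuration is characterised by small total measure relative to the measure of its shifted shadow along the long axis, and both features behave well under shrinking rectangles along their short axis (the total measure only decreases, while the orientations --- and therefore the shadow structure --- are preserved). Once this compatibility is read off from the precise definition of the analytic split, one obtains $\lambda_{[\mathcal{B}]} \geq \lambda_{[\mathcal{C}_N']} \geq N$ for every $N$, hence $\lambda_{[\mathcal{B}]} = \infty$, and Theorem \ref{ T : main theorem} concludes that $M_\mathcal{B}$ is bad.
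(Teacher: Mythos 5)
Your overall strategy coincides with the paper's: prove $\lambda_{[\mathcal{B}]}=\infty$ and invoke Theorem \ref{ T : main theorem}. Where you differ is the middle step, and the paper's route is shorter. After the discretization of Proposition \ref{ P : approx }, the rarefaction hypothesis says that along every direction path of $\mathcal{T}_\Omega$ the family $\mathcal{B}$ contains arbitrarily deep nodes; since the tree $[\mathcal{B}]$ contains by definition every ancestor (below the root) of every element of $\mathcal{B}$, one gets directly $[\mathcal{B}]=\mathcal{T}_\Omega$, hence $\lambda_{[\mathcal{B}]}=\lambda_{\mathcal{T}_\Omega}$, and this is infinite because $\Omega$ is bad --- this last point is exactly the paper's (unproved) Claim, resting on Bateman's analysis, i.e.\ the same external input as your ``direct inspection of the Bateman--Katz split-tree construction''. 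Your detour through finite configurations $\mathcal{C}_N$ and the thinning $\mathcal{C}_N\mapsto\mathcal{C}_N'$ is therefore unnecessary, although it can be made to work.

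Two points in your version need repair. First, the suggestion that large split inside $\mathcal{R}_\Omega$ follows ``by combining Theorem \ref{ T : bateman } with Theorem \ref{ T : main theorem}'' is logically backwards: Theorem \ref{ T : main theorem} bounds $\|M_\Omega\|_p^p$ from below by $\log(\lambda)$, so unboundedness of $M_\Omega$ gives no information on $\lambda_{\mathcal{T}_\Omega}$; only your alternative route (a bad $\Omega$ yields fig trees of arbitrarily large scale, which is Bateman's construction itself) is valid. Second, your justification of the monotonicity $\lambda_{[\mathcal{C}_N']}\geq\lambda_{[\mathcal{C}_N]}$ appeals to a measure-theoretic ``shadow'' heuristic, but the analytic split is a purely combinatorial quantity attached to the discretized tree $\mathcal{T}$, so the argument must pass through Proposition \ref{ P : approx } and the tree definition: replacing each element of a family by an element contained in it (a descendant in $\mathcal{T}$) only enlarges the generated tree below its first branching node, so any fig tree of scale $n$ survives, possibly truncated along its top chain, which changes its height but not its scale. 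With these repairs your argument is correct, but note that it then essentially reduces to the paper's one-line observation that $[\mathcal{B}]=\mathcal{T}_\Omega$.
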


A basis $\mathcal{B}$ satisfying the condition of Theorem \ref{T:app1} is said to be a \textit{rarefaction of the directional basis $\mathcal{R}_\Omega$}. Observe that since we have $\mathcal{B} \subset \mathcal{R}_\Omega$ we have the trivial pointwise estimate $$M_\mathcal{B} \leq M_\Omega.$$ Hence - trivially - we have $\|M_\mathcal{B}\|_p < \infty$ if $\|M_\Omega\|_p < \infty$. Surprisingly, Theorem \ref{T:app1} states that the conserve is also true \textit{i.e.} we have $\|M_\mathcal{B}\|_p = \infty$ if $\|M_\Omega\|_p = \infty$. This discussion gives a classification of rarefied directional maximal operator.

We give another application of Theorem \ref{ T : main theorem} : for $n \in \mathbb{N}^*$ let $r_n \in \mathcal{R}$ be a rectangle whose eccentricity and orientation is of the form $$\left( e_{r_n}, \omega_{r_n} \right) = \left( \frac{1}{n}, \sin(n)\frac{\pi}{4} \right).$$ Consider then the basis $\mathcal{B}_e$ generated by the rectangles $\{r_n\}_{n \geq 1}$ ; we have the following Theorem.

\begin{thm}\label{T :app2}
The operator $M_{\mathcal{B}_e}$ is a bad operator.
\end{thm}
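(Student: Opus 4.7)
The plan is to apply Theorem \ref{ T : main theorem}: if we can establish that $\lambda_{[\mathcal{B}_e]} = \infty$, then $\|M_{\mathcal{B}_e}\|_p = \infty$ for every $1 < p < \infty$ and $\mathcal{B}_e$ is bad by definition. So the entire work goes into exhibiting, inside $\mathcal{B}_e$, the geometric structure measured by the analytic split.

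The arithmetic engine driving everything is Weyl's equidistribution theorem: since $1/(2\pi)$ is irrational, the sequence $\{n \bmod 2\pi\}_{n \geq 1}$ is equidistributed on $[0,2\pi)$, and consequently the orientations $\omega_n = \sin(n)\frac{\pi}{4}$ are equidistributed in $[-\frac{\pi}{4},\frac{\pi}{4}]$ against the pushforward of Lebesgue measure under $\sin$. In particular, for every subinterval $I \subset (-\frac{\pi}{4},\frac{\pi}{4})$ of positive length and every $N \geq 1$, there are infinitely many indices $n \geq N$ with $\omega_n \in I$. Thus the orientations are dense while the associated eccentricities $1/n$ may be made arbitrarily small.

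The first main step is to fix a concrete bad set of directions $\Omega^{*} \subset [0,\frac{\pi}{4})$ --- for instance a standard lacunary Cantor set of infinite order of the type used by Bateman and Katz in \cite{BATEMANKATZ}, which is known to produce unbounded splitting trees at every depth. The second step is an inductive extraction: at each depth $k$ of a Bateman--Katz splitting configuration adapted to $\Omega^{*}$, and for each direction $\omega$ appearing at that level, I would pick an index $n_k(\omega)$ so that $\omega_{n_k(\omega)}$ is a sufficiently fine approximation of $\omega$ relative to the eccentricity scale $1/n_k(\omega)$, which can be made as small as necessary by Weyl. The resulting subfamily $\mathcal{B}' \subset \mathcal{B}_e$ is geometrically indistinguishable, at the relevant scale, from a rarefaction of $\mathcal{R}_{\Omega^{*}}$. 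Through the geometric interpretation of the analytic split emphasized in Theorem \ref{ T : main theorem}, this forces $\lambda_{[\mathcal{B}']} = \infty$, and therefore $\lambda_{[\mathcal{B}_e]} = \infty$.

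The main obstacle is the perturbation issue: unlike in Theorem \ref{T:app1}, where the family sits exactly inside a bad directional basis, here the orientations in $\mathcal{B}_e$ only approximate a bad set of directions and each orientation $\omega_n$ is attached to a single fixed eccentricity $1/n$. The delicate technical point is therefore to choose the indices $n_k$ along the inductive extraction so that the approximation error at level $k$ is much smaller than the angular resolution that the corresponding eccentricities can resolve --- thereby guaranteeing that the combinatorial splits of the target Bateman--Katz tree are faithfully realized by rectangles of $\mathcal{B}_e$. Once this scale matching is achieved, the analytic split inherits the infinite depth of $\Omega^{*}$ and Theorem \ref{ T : main theorem} closes the argument.
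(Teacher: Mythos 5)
Your overall strategy coincides with the paper's: reduce everything to showing $\lambda_{[\mathcal{B}_e]} = \infty$ and invoke Theorem \ref{ T : main theorem}, with Weyl equidistribution of $n \bmod 2\pi$ as the arithmetic input. However, the core of your argument --- the inductive extraction of a subfamily $\mathcal{B}' \subset \mathcal{B}_e$ mimicking a rarefaction of a bad directional basis $\mathcal{R}_{\Omega^*}$ --- is only described, not carried out. You yourself identify the scale-matching between the approximation error $|\omega_{n_k(\omega)} - \omega|$ and the angular resolution $1/n_k(\omega)$ as ``the delicate technical point,'' but you never resolve it; moreover the conclusion $\lambda_{[\mathcal{B}']} = \infty$ rests on two further unproved ingredients: that a bad set of directions $\Omega^*$ produces fig trees of every scale (this is precisely the unproved Claim in the proof of Theorem \ref{T:app1}, and you would need it here too), and that the analytic split is stable under replacing each direction of such a configuration by a nearby orientation carrying a single prescribed eccentricity. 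As written, the proof has a genuine gap at exactly the step that carries all the difficulty.

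The gap is also unnecessary: no auxiliary bad set $\Omega^*$ and no Bateman--Katz splitting configuration are needed. Since $\{\sin(n)\}_n$ is dense in $[-1,1]$ and $e_{r_n} = 1/n \to 0$, one argues directly as the paper does: fix $H \gg 1$ and partition $[0,\tfrac{\pi}{4})$ into the dyadic intervals $I_k = [\tfrac{k}{2^H}\tfrac{\pi}{4}, \tfrac{k+1}{2^H}\tfrac{\pi}{4}]$, $0 \leq k \leq 2^H - 1$. By equidistribution, for each $k$ there are infinitely many $n$ with $\omega_{r_n} \in I_k$, hence one with $e_{r_n} \leq 2^{-H}$ as well. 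After the discretization of Proposition \ref{ P : approx }, such a rectangle sits below the node $u_H(k)$ of the dyadic tree $\mathcal{T}$, so the tree $[\mathcal{B}_e]$ contains every node of depth $H$, i.e.\ it contains the complete tree $[\mathcal{T}_H]$, which is a fig tree of scale $H$. Since $H$ is arbitrary, $\lambda_{[\mathcal{B}_e]} = \infty$, and Theorem \ref{ T : main theorem} gives $\|M_{\mathcal{B}_e}\|_p = \infty$ for all $1 < p < \infty$. I recommend replacing your extraction scheme by this direct filling argument, or else supplying in full the perturbation and splitting lemmas your scheme presupposes.
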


It seems that one needs to obtain a Theorem as least as general as Theorem \ref{ T : main theorem} in order to tackle easily a basis such that $\mathcal{B}_e$. Hopefully Theorems \ref{T:app1} and \ref{T :app2} illustrate the implications of Theorem \ref{ T : main theorem}.

\subsection*{Plan}

Most of this text is dedicated to the proof of Theorem \ref{ T : main theorem} ; it is organized as follow. To begin with, we will explain how we can \textit{discretize} the collection $\mathcal{R}$ which will allow us to precisely define the analytic split of a family $\mathcal{B}$, see sections \ref{S:Def of}, \ref{ S : structure } and \ref{ S : ana split }. Then in sections \ref{S : tools} and \ref{ S : An Estimate of Bateman }, we introduce the notion of \textit{Kakeya-type sets} and recall how Bateman constructed them in \cite{BATEMAN}. Finally we develop important geometric estimates in section \ref{S:geometric estimate} and we prove Theorem \ref{ T : main theorem} in section \ref{ S : proof thm main }. The last two sections are devoted to the applications of Theorem \ref{ T : main theorem}.

\section*{Acknowledgments}

I warmly thank Laurent Moonens and Emmanuel Russ for their kind advices.

\section{Definition of $\mathcal{T}$}\label{S:Def of}

%%%%%%%%%%%%%%%%%%%%%
%%%%%%%%%%%%%%%%%%%%%
%%%%%%%%%%%%%%%%%%%%%
%%%%%%%%%%%%%%%%%%%%%
%%%%%%%%%%%%%%%%%%%%%
%%%%%%%%%%%%%%%%%%%%%
%%%%%%%%%%%%%%%%%%%%%
%%%%%%%%%%%%%%%%%%%%%
%%%%%%%%%%%%%%%%%%%%%
\begin{figure}[h!]
\centering
\includegraphics[scale=1]{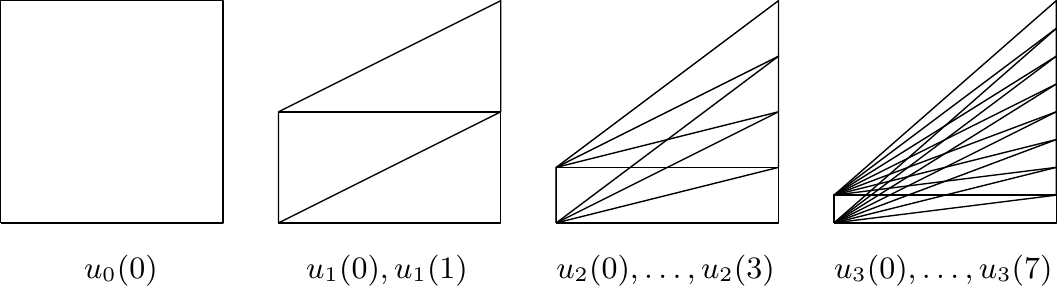}
\caption{A representation of the first element of $\mathcal{T}$.}  
\end{figure}
%%%%%%%%%%%%%%%%%%%%%
%%%%%%%%%%%%%%%%%%%%%
%%%%%%%%%%%%%%%%%%%%%
%%%%%%%%%%%%%%%%%%%%%
%%%%%%%%%%%%%%%%%%%%%
%%%%%%%%%%%%%%%%%%%%%
%%%%%%%%%%%%%%%%%%%%%
%%%%%%%%%%%%%%%%%%%%%
%%%%%%%%%%%%%%%%%%%%%

Instead of working with rectangles we will consider that our family $\mathcal{B}$ is included in the collection $\mathcal{T}$ composed of \textit{pulled-out parallelograms} which is defined as follow. For $n \geq 0$ and $0 \leq k \leq 2^n-1$ consider the parallelogram $u_n(k)$  whose vertices are the points $(0,0),(0, \frac{1}{2^n}),(1,\frac{k-1}{2^n})$ and $(1,\frac{k}{2^n})$. We say that $u_n(k)$ is a \textit{pulled-out parallelogram of scale $n$} and we define the collection $\mathcal{T}$ as $$\mathcal{T} = \left\{ u_n(k) : n \geq 0, 0 \leq k \leq 2^n-1 \right\}.$$ \textbf{Morally, the parallelogram $u_n(k)$ should be thought as a rectangle whose eccentricity and orientation are $$\left( e_{u_n(k)}, \omega_{u_n(k)} \right) = \left(  \frac{1}{2^n},  \frac{k}{2^n}\frac{\pi}{4} \right).$$ }The following proposition precises that we do not lose information if we consider that our family are contained in $\mathcal{T}$ and not in $\mathcal{R}$. We won't prove it since this kind of reduction is well known in the literature, see Bateman \cite{BATEMAN} or Alfonseca \cite{ALFONSECA} for examples.

\begin{prp}\label{ P : approx }
Fix an arbitrary family $\mathcal{B}$ in $\mathcal{R}$. Without loss of generality, we can suppose that we have $\{ \omega_r : r \in \mathcal{B} \} \subset [0,\frac{\pi}{4}).$ There exists a family $\mathcal{B}_a$ contained in $\mathcal{T}$ satisfying the following inequality $$\frac{1}{C_d} \times M_{\mathcal{B}_a}  \leq M_\mathcal{B} \leq {C_d} \times M_{\mathcal{B}_a}$$ where $C_d = C_2$ is a constant only depending on the dimension $d = 2$.
\end{prp}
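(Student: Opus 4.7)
The plan is a standard dyadic discretization argument. As a preliminary, by partitioning $\mathcal{B}$ according to whether $\omega_r$ lies in $[j\pi/4, (j+1)\pi/4)$ for $j=0,1,2,3$ and using that $\pi/4$-rotations and axis reflections are $L^p$-isometries that preserve eccentricity, it suffices to treat the case $\{\omega_r : r \in \mathcal{B}\} \subset [0,\pi/4)$ as stated; the general case then follows after multiplying the final constant by $4$.

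For such a family, I would associate to each $r \in \mathcal{B}$ a specific element $u(r) \in \mathcal{T}$ dyadically approximating both its eccentricity and its orientation. Let $n = n(r) \geq 0$ be the unique integer with $2^{-(n+1)} \leq e_r < 2^{-n}$, and choose $k = k(r) \in \{0, 1, \dots, 2^n-1\}$ so that the true orientation of $u_n(k)$ lies within a universal multiple of $2^{-n}$ of $\omega_r$. Such a $k$ exists because the set of angles $\{\omega_{u_n(k)} : 0 \leq k < 2^n\}$ is a $2^{-n}$-net of an interval that contains $[0, \pi/4)$. I then set $\mathcal{B}_a = \{u(r) : r \in \mathcal{B}\} \subset \mathcal{T}$.

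The heart of the argument is the following geometric comparability lemma: there exists a universal constant $C_d$ such that for each $r \in \mathcal{B}$ one can find a translation $\vec{t} \in \mathbb{R}^2$ and a scalar $h > 0$ with
\[
\tfrac{1}{C_d} \cdot r \subset \vec{t} + h \cdot u(r) \subset C_d \cdot r,
\]
where $\alpha \cdot r$ denotes the concentric dilate of $r$ by factor $\alpha$. To establish this, take $h$ so that $h \cdot u(r)$ and $r$ have the same long-axis length and $\vec{t}$ so that their centers coincide. The two parallelograms then share a center and have long-axis length $\ell_r$; their short-axis lengths differ by a factor of at most $2$ (because $e_r \asymp 2^{-n}$); and their long axes meet at an angle of order $2^{-n} \asymp e_r$. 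Consequently the displacement between corresponding long edges is at most of order $\ell_r \cdot e_r = s_r$, comparable to the short-axis length, so each parallelogram lies in a concentric dilate of the other by a universal factor.

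Given this lemma, the comparison of the maximal operators is routine. Any $r' \in \mathcal{B}^*$ decomposes as $r' = \vec{t}' + h' r$ for some $r \in \mathcal{B}$, and applying the affine map $x \mapsto \vec{t}' + h'x$ to the lemma yields an element $u' \in \mathcal{B}_a^*$ with $C_d^{-1} \cdot r' \subset u' \subset C_d \cdot r'$. The averages on $r'$ and $u'$ are thus comparable, giving the pointwise inequality $M_{\mathcal{B}} f \leq C_d' M_{\mathcal{B}_a} f$, and a symmetric argument provides the reverse bound. The main obstacle is the geometric comparability lemma itself: its proof is elementary but requires carefully reconciling the angular and eccentricity tolerances so that the discrepancy stays of order $s_r$ and does not amplify through scaling; all subsequent manipulations are straightforward affine bookkeeping.
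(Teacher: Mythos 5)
The paper itself gives no proof of Proposition~\ref{ P : approx }: it is stated without argument, with a pointer to Bateman and Alfonseca where this reduction is standard. Your proposal is essentially that standard argument --- dyadic matching of eccentricity and orientation, a concentric-sandwich lemma $\frac{1}{C_d}\cdot r \subset \vec{t}+h\,u(r) \subset C_d\cdot r$, and affine invariance of the derivation basis --- and it is correct in substance. Two small points to tighten. First, the maximal function $M_{\mathcal{B}_a}$ is a supremum over elements of $\mathcal{B}_a^*$ that \emph{contain} the point $x$; the element $u'$ produced by your sandwich need not contain $x$ (e.g.\ when $x$ sits near a corner of $r'$ outside $C_d^{-1}\cdot r'$), so you should pass to the concentric dilate $C_d^{2}\,u'$, which still lies in $\mathcal{B}_a^*$, contains $C_d\cdot r'\supset r'\ni x$, and has measure comparable to $|r'|$; the averages then compare as you claim. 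Second, your preliminary remark that the case of general orientations ``follows after multiplying the constant by $4$'' cannot produce a single family $\mathcal{B}_a\subset\mathcal{T}$ with a two-sided pointwise comparison, since $\mathcal{T}$ only realizes orientations in $[0,\frac{\pi}{4})$; one would need rotated copies of $\mathcal{T}$ (or settle for a norm-level statement). Since the restriction $\{\omega_r : r\in\mathcal{B}\}\subset[0,\frac{\pi}{4})$ is built into the statement, this does not affect your proof of the proposition as stated, but the sentence should be rephrased or dropped. Also note that the available slopes in $\mathcal{T}$ at scale $n$ only form an $O(2^{-n})$-net of $[0,\frac{\pi}{4})$ rather than covering it, which is all you actually use.
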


\textbf{In regards of the $L^p(\mathbb{R}^2)$-norm, the maximal operator $M_\mathcal{B}$ and $M_{\mathcal{B}_a}$ have the same behavior and so we will identify $\mathcal{B}$ and $\mathcal{B}_a$. Hence, unless stated otherwise, we will always supposed that our family $\mathcal{B}$ is now contained in $\mathcal{T}$.} We give an example : consider the family $\mathcal{B} = \mathcal{R}_{\{0\}}$. In this case, we denote the operator $M_{\{0\}}$ by $M_S$ : in the the literature, $M_S$ is called the \textit{strong} maximal operator. We would like an explicit pointwise approximation of $M_S$ by an operator $M_{\mathcal{B}_0}$ where $\mathcal{B}_0$ is a family in $\mathcal{T}$, as announced in Proposition \ref{ P : approx }. Observe that the family $\mathcal{B}_0$ defined as $$\mathcal{B}_0 := \{ u_n(0) \in \mathcal{T} : n \geq 0 \}$$ satisfies Proposition \ref{ P : approx } in this case ; precisely one has for any $f$ locally integrable and $x \in \mathbb{R}^2$ $$M_{\mathcal{B}_0 } f(x) \leq M_S f(x) \leq 2 M_{\mathcal{B}_0 } f(x).$$

\section{Structure of $\mathcal{T}$}\label{ S : structure }

The collection of $\mathcal{T}$ has a natural structure of binary tree and we develop a vocabulary adapted to this structure.

%%%%%%%%%%%%%%%%%%%%%
%%%%%%%%%%%%%%%%%%%%%
%%%%%%%%%%%%%%%%%%%%%
%%%%%%%%%%%%%%%%%%%%%
%%%%%%%%%%%%%%%%%%%%%
%%%%%%%%%%%%%%%%%%%%%
%%%%%%%%%%%%%%%%%%%%%
%%%%%%%%%%%%%%%%%%%%%
%%%%%%%%%%%%%%%%%%%%%
\begin{figure}[h!]
\centering
\includegraphics[scale=1]{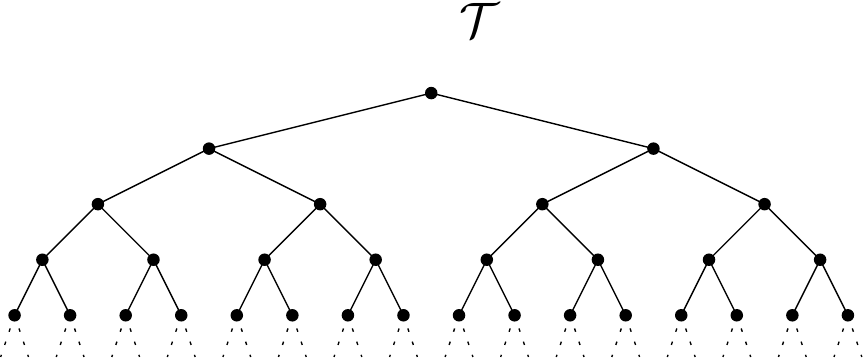}
\caption{A representation of the first element of $\mathcal{T}$.}  
\end{figure}
%%%%%%%%%%%%%%%%%%%%%
%%%%%%%%%%%%%%%%%%%%%
%%%%%%%%%%%%%%%%%%%%%
%%%%%%%%%%%%%%%%%%%%%
%%%%%%%%%%%%%%%%%%%%%
%%%%%%%%%%%%%%%%%%%%%
%%%%%%%%%%%%%%%%%%%%%
%%%%%%%%%%%%%%%%%%%%%
%%%%%%%%%%%%%%%%%%%%%

%%%%%%%%%%%%%%%%%%%%%
%%%%%%%%%%%%%%%%%%%%%
%%%%%%%%%%%%%%%%%%%%%
%%%%%%%%%%%%%%%%%%%%%
%%%%%%%%%%%%%%%%%%%%%
%%%%%%%%%%%%%%%%%%%%%
%%%%%%%%%%%%%%%%%%%%%
%%%%%%%%%%%%%%%%%%%%%
%%%%%%%%%%%%%%%%%%%%%
\begin{figure}[h!]
\centering
\includegraphics[scale=1]{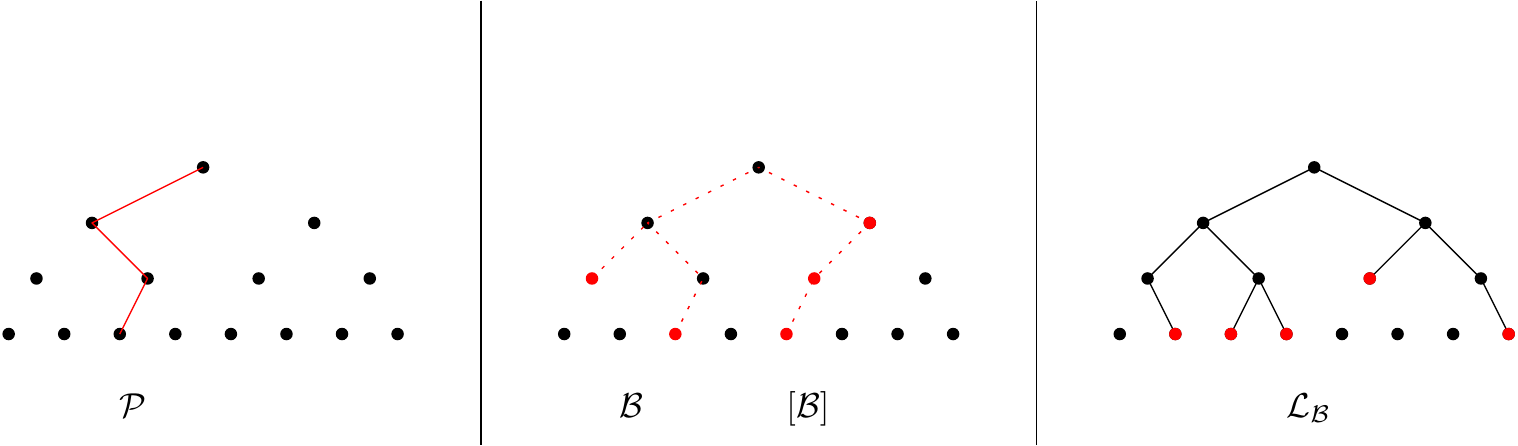}
\caption{From the left to the right : a path $\mathcal{P}$, a family $\mathcal{B}$ and the tree it generates $[\mathcal{B}]$ and the leaves of tree.}  
\end{figure}
%%%%%%%%%%%%%%%%%%%%%
%%%%%%%%%%%%%%%%%%%%%
%%%%%%%%%%%%%%%%%%%%%
%%%%%%%%%%%%%%%%%%%%%
%%%%%%%%%%%%%%%%%%%%%
%%%%%%%%%%%%%%%%%%%%%
%%%%%%%%%%%%%%%%%%%%%
%%%%%%%%%%%%%%%%%%%%%
%%%%%%%%%%%%%%%%%%%%%

\subsection*{Parent and children}

For any $u \in \mathcal{T}$ of scale $n \geq 1$, there exist a unique $u_f \in \mathcal{T}$ of scale $n-1$ such that  $u \subset u_f$. We say that $u_f$ is the \textit{parent} of $u$. In the same fashion, observe that there are only two elements $u_h,u_l \in \mathcal{T}$ of scale $n+1$ such that $u_h,u_l \subset u$. We say that $u_h$ and $u_l$ are the \textit{children} of $u$. Observe that $u \in \mathcal{T}$ is the child of $v \in \mathcal{T}$ if and only if $u \subset v$ and $2|u| = |v|$ : we will often use those two conditions.

\subsection*{Path}

We say that a sequence (finite or infinite) $\left\{ u_i \right\}_{i \in \mathbb{N}}  \subset \mathcal{T}$ is a \textit{path} if it satisfies $u_{i+1} \subset u_i$ and $2|u_{i+1}| = |u_i| $ for any $i$ \textit{i.e.} if $u_i$ is the parent of $u_{i+1}$ for any $i$. Different situations can occur. A finite path $\mathcal{P}$ has a first element $u$ and a last element $v$ (defined in a obvious fashion) and we will write $\mathcal{P}_{u,v} := \mathcal{P}$. On the other hand, an infinite path $\mathcal{P}$ has no endpoint.

\subsection*{Tree}

For any family $\mathcal{B}$ contained in $\mathcal{T}$, there is a unique parallelogram $r \in \mathcal{T}$ such that any $u \in \mathcal{B}$ is included in $r$ and $|r|$ is minimal. We say that this element $r_\mathcal{B} := r$ is the \textit{root} of $\mathcal{B}$ and we define the set $[\mathcal{B}]$ as $$[\mathcal{B}] := \left\{ u \in \mathcal{T} : \exists v \in \mathcal{B}, v \subset u \subset r_\mathcal{B} \right\}.$$ A subset of  $\mathcal{T}$ of the form $[\mathcal{B}]$ is called a \textit{tree} generated by $\mathcal{B}$.

\subsection*{Leaf}

We define the set $L_\mathcal{B}$ as $$\mathcal{L}_\mathcal{B} = \left\{ u \in \mathcal{B} : \forall v \in \mathcal{B}, v \subset u \Rightarrow v = u \right\}.$$ An element of $\mathcal{L}_\mathcal{B}$ is called a \textit{leaf} of $\mathcal{B}$. Observe that for any $\mathcal{B}$ in $\mathcal{T}$ we have $[\mathcal{B}] = [\mathcal{L}_\mathcal{B}]$ and also $ \mathcal{L}_\mathcal{B} = \mathcal{L}_{[\mathcal{B}]}$. The first identity says that the leaves of a tree $[\mathcal{B}]$ can be seen as the minimal set that generates $[\mathcal{B}]$. The second identity states that $[\mathcal{B}]$ is not bigger than $\mathcal{B}$ in the sense that it does not have more leaves. If $\mathcal{P}$ is an infinite path, we have by definition $\mathcal{L}_\mathcal{P} = \emptyset$.

\subsection*{Structural disposition}

Let $\mathcal{B}$ be an arbitrary family in $\mathcal{T}$ and let $r$ be the root of $[\mathcal{B}]$. We fix an arbitrary element $\Tilde{r}$ in $\mathcal{T}$ and we consider the family $\Tilde{\mathcal{B}}$ defined as follow : the family $\Tilde{\mathcal{B}}$ has the same disposition than $\mathcal{B}$ in $\mathcal{T}$ but $[\Tilde{\mathcal{B}}]$ is rooted at $\Tilde{r}$. In order to formulate it precisely consider the unique bijective linear map with positive determinant $L : \mathbb{R}^2 \rightarrow \mathbb{R}^2 $ such that $L(r) = \Tilde{r}$ and define the family $\Tilde{\mathcal{B}}$ as $$\Tilde{\mathcal{B}} := \left\{ L(u) : u \in \mathcal{B} \right\} \subset \mathcal{T}.$$ Now, it is routine to show that we have for any $f \in L^1_{\mathrm{loc}}(\mathbb{R}^2)$ $$M_{\Tilde{\mathcal{B}}} f = \frac{1}{|\det(L)|} \times M_\mathcal{B} (f \circ L)$$ and so we have $\|M_{\Tilde{\mathcal{B}}}\|_p = \|M_{{\mathcal{B}}}\|_p$ for any $1 < p < \infty$. Hence, what truly matters when considering a family $\mathcal{B}$ contained in $\mathcal{T}$ is not its \textit{absolute position} in the tree $\mathcal{T}$ but its \textit{structural disposition} in the binary tree.

\section{Analytic split}\label{ S : ana split }

We associate to any family $\mathcal{B}$ included in $\mathcal{T}$ a natural number $\lambda_{[\mathcal{B}]} \in \mathbb{N} \cup \{ \infty \}$ that we call \textit{analytic split} ; its definition relies on specific trees in $\mathcal{T}$, namely \textit{fig trees}.

%%%%%%%%%%%%%%%%%%%%%
%%%%%%%%%%%%%%%%%%%%%
%%%%%%%%%%%%%%%%%%%%%
%%%%%%%%%%%%%%%%%%%%%
%%%%%%%%%%%%%%%%%%%%%
%%%%%%%%%%%%%%%%%%%%%
%%%%%%%%%%%%%%%%%%%%%
%%%%%%%%%%%%%%%%%%%%%
%%%%%%%%%%%%%%%%%%%%%
\begin{figure}[h!]
\centering
\includegraphics[scale=0.9]{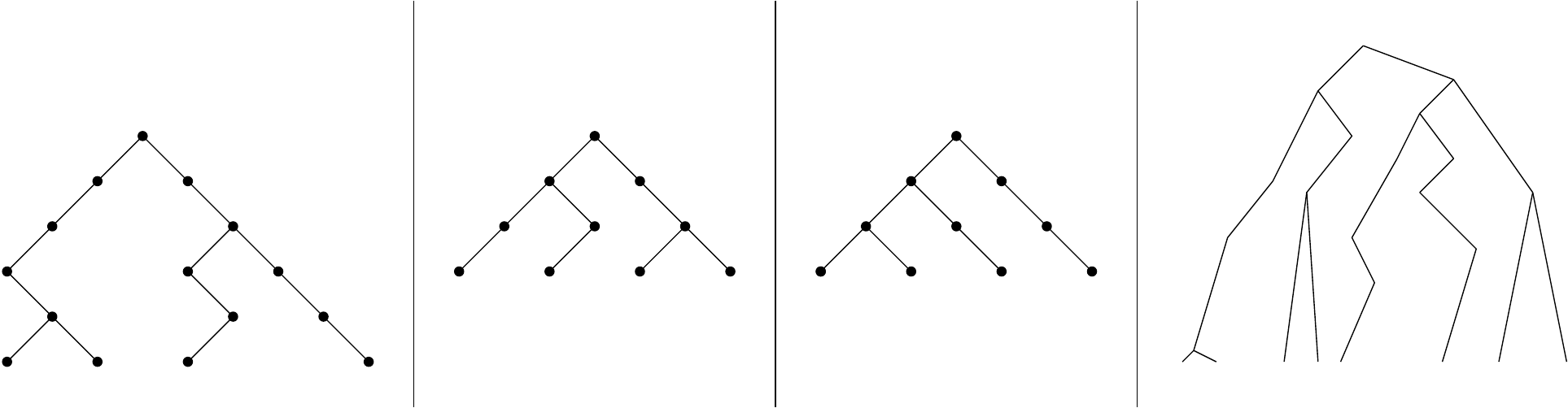}
\caption{The first two tree are fig trees of scale $2$, the third tree is not a fig tree and the last tree is a fig tree of scale $3$.}  
\end{figure}
%%%%%%%%%%%%%%%%%%%%%
%%%%%%%%%%%%%%%%%%%%%
%%%%%%%%%%%%%%%%%%%%%
%%%%%%%%%%%%%%%%%%%%%
%%%%%%%%%%%%%%%%%%%%%
%%%%%%%%%%%%%%%%%%%%%
%%%%%%%%%%%%%%%%%%%%%
%%%%%%%%%%%%%%%%%%%%%
%%%%%%%%%%%%%%%%%%%%%

\subsection*{Boundary of $\left[\mathcal{B}\right]$ and splitting number}

For any tree $[\mathcal{B}]$, we define its \textit{boundary} $\partial [\mathcal{B}]$ as the set of path in $[\mathcal{B}]$ that are maximal for the inclusion \textit{i.e.} $\mathcal{P} \in \partial [\mathcal{B}]$ if and only if $\mathcal{P}$ is a path included in $[\mathcal{B}]$ such that if $\mathcal{P}' \subset [\mathcal{B}]$ is a path that contains $\mathcal{P}$ then $\mathcal{P} = \mathcal{P}'$. For any tree $[\mathcal{B}]$ and path $\mathcal{P} \in \partial[\mathcal{B}]$ we define the \textit{splitting number of $\mathcal{P}$ relatively to $[\mathcal{B}]$} as $$s_{\mathcal{P},[\mathcal{B}]} := \# \left\{ u \in [\mathcal{B}] \setminus \mathcal{P} : \exists v \in \mathcal{P}, u \subset v, 2|u| = |v| \right\}.$$ Observe that the splitting number of a path $\mathcal{P}$ is defined relatively to a tree $[\mathcal{B}]$ \textit{i.e.} we might have $s_{\mathcal{P},[\mathcal{B}]} \neq s_{\mathcal{P},[\mathcal{C}]}$ for different trees $\mathcal{B}$ and $\mathcal{C}$.

\subsection*{Fig trees $\left[\mathcal{F} \right]$}

We say that a tree $[\mathcal{F}]$ is a \textit{fig tree of scale $n$ and height $h$} when  \begin{itemize}
    \item $[\mathcal{F}]$ is finite and $ \#\partial[\mathcal{F}] = 2^n $
    \item for any $\mathcal{P} \in \partial[\mathcal{F}]$ we have $ s_{\mathcal{P},[\mathcal{F}]} = n$ and $\# \mathcal{P} = h$.
\end{itemize} Observe that by construction we always have $h \geq n$. A basic example of fig tree of scale $n$ is the tree $[\mathcal{T}_n]$ defined as $[\mathcal{T}_n] = \left\{ u \in \mathcal{T} : |u| \geq \frac{1}{2^n} \right\}$. In this case, the height of $[\mathcal{T}_n]$ is $n$ ; however this is the only fig tree satisfying this. One may see a fig tree $[\mathcal{F}]$ of scale $n$ as a uniformly stretched version of $[\mathcal{T}_n]$.

\subsection*{Analytic split of $\mathcal{B}$}

We define the \textit{analytic split $\lambda_{[\mathcal{B}]}$ of a tree $[\mathcal{B}]$} as the integer $n$ such that $[\mathcal{B}]$ contains a fig tree $[\mathcal{F}]$ of scale $n$ and do not contains any fig tree of scale $n+1$. In the case where $[\mathcal{B}]$ contains fig trees of arbitrary high scale, we set $\lambda_{[\mathcal{B}]} = \infty$. More generally for any family $\mathcal{B}$ contained in $\mathcal{T}$ (\textit{i.e.} when $\mathcal{B}$ is not necessarily a tree), we define its analytic split as $$\lambda_\mathcal{B} := \lambda_{[\mathcal{B}]}.$$ \textbf{Hence by definition, the analytic split of a family $\mathcal{B}$ is the same as the analytic split of the tree $[\mathcal{B}]$. Observe that thanks to Theorem \ref{ T : main theorem} this definition is pertinent.}

\section{Kakeya-type sets}\label{S : tools}

We detail how we can construct a set $A$ with elements of $\mathcal{B}^*$ that gives non trivial lower bound on $\|M_\mathcal{B} \|_p$ for any $1 < p < \infty$. We say that a maximal operator $M_\mathcal{B}$ \textit{admits a Kakeya-type set $A \subset \mathbb{R}^2$ of level $(\eta,\epsilon)$} with $\epsilon,\eta > 0$ when we have $$|A| \leq \epsilon \times \left|\left\{ M_\mathcal{B}\mathbb{1}_A > \eta \right\}\right|.$$ In this case, for any $p > 1$ we have $$\|M_\mathcal{B}\|_p  \geq  \eta{\epsilon^{-\frac{1}{p}} }.$$ Indeed, we have $\int (M_\mathcal{B}\mathbb{1}_{A})^p \geq \eta^p \epsilon^{-1}|A|$ ; since $|A| = \|\mathbb{1}_A\|_p^p$.

\begin{prp}
If $M_\mathcal{B}$ admits a Kakeya-type set of level $\left(\eta,\epsilon\right)$ then for any $1 < p < \infty$ we have $$\|M_\mathcal{B}\|_p  \geq  \eta{\epsilon^{-\frac{1}{p}} }.$$
\end{prp}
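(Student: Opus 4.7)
The plan is essentially to convert the measure inequality defining a Kakeya-type set into an $L^p$ bound via Chebyshev/layer-cake, exactly as hinted in the paragraph preceding the statement. The input is the hypothesis $|A| \leq \epsilon \cdot |\{M_\mathcal{B}\mathbb{1}_A > \eta\}|$, and the target is $\|M_\mathcal{B}\|_p \geq \eta\epsilon^{-1/p}$.

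First I would test $M_\mathcal{B}$ against the concrete function $f = \mathbb{1}_A$, whose $L^p$-norm is $\|\mathbb{1}_A\|_p = |A|^{1/p}$. To lower-bound $\|M_\mathcal{B}\mathbb{1}_A\|_p$, I apply the trivial Chebyshev-type inequality
\[
\int_{\mathbb{R}^2}(M_\mathcal{B}\mathbb{1}_A)^p \;\geq\; \eta^p \cdot \bigl|\{M_\mathcal{B}\mathbb{1}_A > \eta\}\bigr|,
\]
valid because $(M_\mathcal{B}\mathbb{1}_A)^p \geq \eta^p \mathbb{1}_{\{M_\mathcal{B}\mathbb{1}_A > \eta\}}$ pointwise.

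Second, I invoke the Kakeya-type hypothesis, which rearranges to $|\{M_\mathcal{B}\mathbb{1}_A > \eta\}| \geq \epsilon^{-1}|A|$. Substituting into the previous display yields
\[
\|M_\mathcal{B}\mathbb{1}_A\|_p^p \;\geq\; \eta^p \epsilon^{-1} |A| \;=\; \eta^p \epsilon^{-1}\|\mathbb{1}_A\|_p^p,
\]
and taking $p$-th roots gives $\|M_\mathcal{B}\mathbb{1}_A\|_p \geq \eta\epsilon^{-1/p}\|\mathbb{1}_A\|_p$. Dividing by $\|\mathbb{1}_A\|_p$ (which is finite and positive as soon as $A$ has positive finite measure, a harmless assumption since otherwise the statement is vacuous or trivial) and taking the supremum in the definition of the operator norm yields the claimed bound.

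There is no real obstacle here: the whole argument is a two-line application of Chebyshev combined with the defining inequality of a Kakeya-type set. The only mild care needed is ensuring that $A$ can be chosen of positive finite measure so that $\|\mathbb{1}_A\|_p \neq 0, \infty$; the definition of admitting a Kakeya-type set implicitly provides such an $A$, so this is automatic.
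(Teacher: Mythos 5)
Your argument is correct and is exactly the paper's own proof: test $M_\mathcal{B}$ on $\mathbb{1}_A$, apply the Chebyshev bound $\int (M_\mathcal{B}\mathbb{1}_A)^p \geq \eta^p |\{M_\mathcal{B}\mathbb{1}_A > \eta\}| \geq \eta^p \epsilon^{-1}|A|$, and use $|A| = \|\mathbb{1}_A\|_p^p$. Nothing to add.
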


Formally one can construct interesting Kakeya-type sets for $M_\mathcal{B}$ with elements of $\mathcal{B}^*$ as follow. Suppose there is a collection $ \left\{ p_i \right\}_{i \in I} \subset \mathcal{B}^*$  such that for each $i \in I$ there is a subset $s_i \subset p_i$ satisfying $|s_i| \geq \eta|p_i|$ and $$\left|\bigcup_{i \in I} s_i\right| < \epsilon \left|\bigcup_{i \in I } p_i\right|.$$ In this case, the set $A := \bigcup_{i \in I} s_i$ is a Kakeya-type set of level $(\eta,\epsilon)$. Indeed, we have the following inclusion $$ \bigcup_{i \in I} p_i \subset \left\{ M_\mathcal{B}\mathbb{1}_{ A} > \eta \right\}$$ because $p_i \in \mathcal{B}^*$ for any $i \in I$ and so $|A| \leq \epsilon \left|\left\{ M_\mathcal{B}\mathbb{1}_{ A } > \eta \right\}\right|$.

\section{Bateman's construction}\label{ S : An Estimate of Bateman }

In \cite{BATEMAN}, Bateman proves the following Theorem \ref{ T : main thm bateman } by making an explicit construction of a Kakeya-type set of the desired level. We will recall how he achieves the construction of this set since we will use it in order to prove Theorem \ref{ T : main theorem}.

\begin{thm}[Bateman's construction \cite{BATEMAN}]\label{ T : main thm bateman }
Suppose that $[\mathcal{F}]$ is a fig tree of scale $n$ and height $h$. In this case the maximal operator $M_{[\mathcal{F}]}$ admits a Kakeya-type set of level $$\left(\frac{1}{4}, C \log(n)^{-1}\right) \simeq \left(\frac{1}{4},\log(n)^{-1}\right).$$
\end{thm}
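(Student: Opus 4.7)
The plan is to build the Kakeya-type set as a union of translates of the $2^n$ leaves of $[\mathcal{F}]$, placed according to the binary-tree structure determined by the $n$ splittings along each maximal path of $[\mathcal{F}]$; this is essentially the \emph{sticky collection} of Bateman--Katz adapted to fig trees. Index the paths in $\partial[\mathcal{F}]$ by binary strings $\epsilon = \epsilon_1\cdots\epsilon_n \in \{0,1\}^n$, where $\epsilon_k$ records the choice of child at the $k$-th split of $\mathcal{P}_\epsilon$. Let $\ell_\epsilon$ denote the leaf of $\mathcal{P}_\epsilon$; by the fig-tree hypothesis all leaves have common area $2^{-h}$. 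The goal is to choose shifts $\tau_\epsilon \in \mathbb{R}^2$ and a companion family of ``trunks'' $\{p_\epsilon\}_\epsilon \subset [\mathcal{F}]^*$ such that the set $A := \bigcup_\epsilon (\tau_\epsilon + \ell_\epsilon)$ satisfies $|A| \leq C (\log n)^{-1} \bigl|\bigcup_\epsilon p_\epsilon\bigr|$, with each shifted leaf $\tau_\epsilon + \ell_\epsilon$ occupying at least one quarter of the matching $p_\epsilon$.

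The shifts are defined recursively, processing the $n$ splitting scales from root to leaves. At the $k$-th step one partitions the current family according to $\epsilon_k$ and separates the two halves by a translation chosen so that, in the frame of the common parent parallelogram $v$, the two halves merge to occupy a constant fraction of $v$. This works because two children of a parent in $\mathcal{T}$ are parallelograms of area $|v|/2$ partitioning $v$, so one can overlay the two halves to obtain a union that still fits inside a single translate of $v$, with overlap at least $|v|/4$. For each $\epsilon$ one then lets $p_\epsilon \in [\mathcal{F}]^*$ be the translate of a fixed ancestor of $\ell_\epsilon$ (at the top splitting scale, say) positioned so that $\tau_\epsilon + \ell_\epsilon \subset p_\epsilon$ and $|\tau_\epsilon + \ell_\epsilon| \geq |p_\epsilon|/4$; averaging $\mathbb{1}_A$ against $p_\epsilon$ yields $M_{[\mathcal{F}]}\mathbb{1}_A \geq \tfrac14$ on $p_\epsilon$, hence $\bigcup_\epsilon p_\epsilon \subset \{M_{[\mathcal{F}]}\mathbb{1}_A > \tfrac14\}$, which is exactly the Kakeya-type property at level $(\tfrac14,\cdot)$.

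The heart of the argument, and the step I expect to be the main obstacle, is the logarithmic savings $|A| \leq C(\log n)^{-1}\bigl|\bigcup_\epsilon p_\epsilon\bigr|$. It is obtained by tracking, scale by scale, the extra area contributed when the two halves are merged at the $k$-th split, and summing the resulting geometric-series contributions across $k = 1, \dots, n$. The delicate point is ensuring that the shifts chosen at the $k$-th scale remain compatible with those chosen at coarser scales $1, \dots, k-1$: the nested structure of $\mathcal{T}$ is what makes this possible, since each fine-scale adjustment sits strictly inside a parallelogram already shifted at the previous scale. It is also precisely this nesting constraint that forces the savings to degrade from the classical Perron-tree bound $1/n$ down to the weaker $1/\log n$ bound of a sticky collection, and that fixes the $C\log(n)^{-1}$ in the final Kakeya-type level.
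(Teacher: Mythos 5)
There is a genuine gap, and it sits exactly where you yourself flag ``the main obstacle'': the logarithmic savings $|A| \leq C(\log n)^{-1}\bigl|\bigcup_\epsilon p_\epsilon\bigr|$ is the entire content of the theorem, and your sketch asserts it rather than proves it. The paper's route is not a deterministic Perron-tree-style recursion at all: it places $2^h$ rows $\vec{t}_k + r_k$, where the $r_k$ are \emph{independent, uniformly random} leaves of $[\mathcal{F}]$, sets $A_1 = \bigcup_k (\vec{t}_k + r_k)$ and $A_2 = \bigcup_k(\vec{t}_k + r_k')$ with $r_k'$ the unit-shifted copy, and then invokes Bateman's probabilistic estimate $\mathbb{P}\bigl(|A_2| \geq \tfrac{\log n}{C}|A_1|\bigr) > 0$ (Theorem \ref{T : Main theorem Bateman}), whose proof requires percolation on trees and the notion of stickiness and is imported from \cite{BATEMAN}; the only elementary step is the density computation showing $A_2 \subset \{M_{[\mathcal{F}]}\mathbb{1}_{A_1} > \tfrac14\}$, which is the part your second paragraph correctly reproduces in spirit. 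Replacing the random sticky construction by a scale-by-scale deterministic merging is not a harmless variant: the paper itself points out that generalized Perron trees are ad hoc constructions available only in special situations, and the reason Bateman (and Bateman--Katz before him) randomize is precisely that no such deterministic scheme is known for arbitrary splitting structures.

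Two specific assertions in your sketch also fail. First, the two children of a parent $v$ in $\mathcal{T}$ do \emph{not} partition $v$: all pulled-out parallelograms of a given scale share the anchor segment on the $x=0$ side, so the children overlap there and cover only half of $v$'s left edge; the ``overlay the two halves inside a translate of $v$ with overlap at least $|v|/4$'' step is therefore not available as stated, and in a fig tree of height $h \gg n$ consecutive splits are separated by long chains, so the leaves $\ell_\epsilon$ live at wildly different scales and cannot be merged pairwise against their common ancestor in the way you describe. Second, ``summing the resulting geometric-series contributions across $k=1,\dots,n$'' produces an $O(1)$ bound, not an $O(1/\log n)$ one; nothing in the sketch explains where the logarithm comes from, and obtaining it for an arbitrary fig tree is exactly the hard estimate that the paper treats as a black box from \cite{BATEMAN}. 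Since the statement is attributed to Bateman, the honest fix is either to cite that estimate explicitly (as the paper does) or to reproduce the percolation/stickiness argument; as written, your proposal proves only the easy inclusion $\bigcup_\epsilon p_\epsilon \subset \{M_{[\mathcal{F}]}\mathbb{1}_A > \tfrac14\}$ and leaves the measure inequality unproved.
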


We fix an arbitrary fig tree $[\mathcal{F}]$ of scale $n$ and height $h$ rooted at $u_0(0)$ ; we are looking for a Kakeya-type set - that we will denote $A_1$ - of level $$\left(\frac{1}{4}, C \log(n)^{-1}\right).$$ Bateman constructs this Kakeya-type set $A_1$ as a realisation of a random set that we denote - in the same fashion, $A_1(\omega)$ - this is done in three steps.

%%%%%%%%%%%%%%%%%%%%%
%%%%%%%%%%%%%%%%%%%%%
%%%%%%%%%%%%%%%%%%%%%
%%%%%%%%%%%%%%%%%%%%%
%%%%%%%%%%%%%%%%%%%%%
%%%%%%%%%%%%%%%%%%%%%
%%%%%%%%%%%%%%%%%%%%%
%%%%%%%%%%%%%%%%%%%%%
%%%%%%%%%%%%%%%%%%%%%
\begin{figure}[h!]
\centering
\includegraphics[scale=0.8]{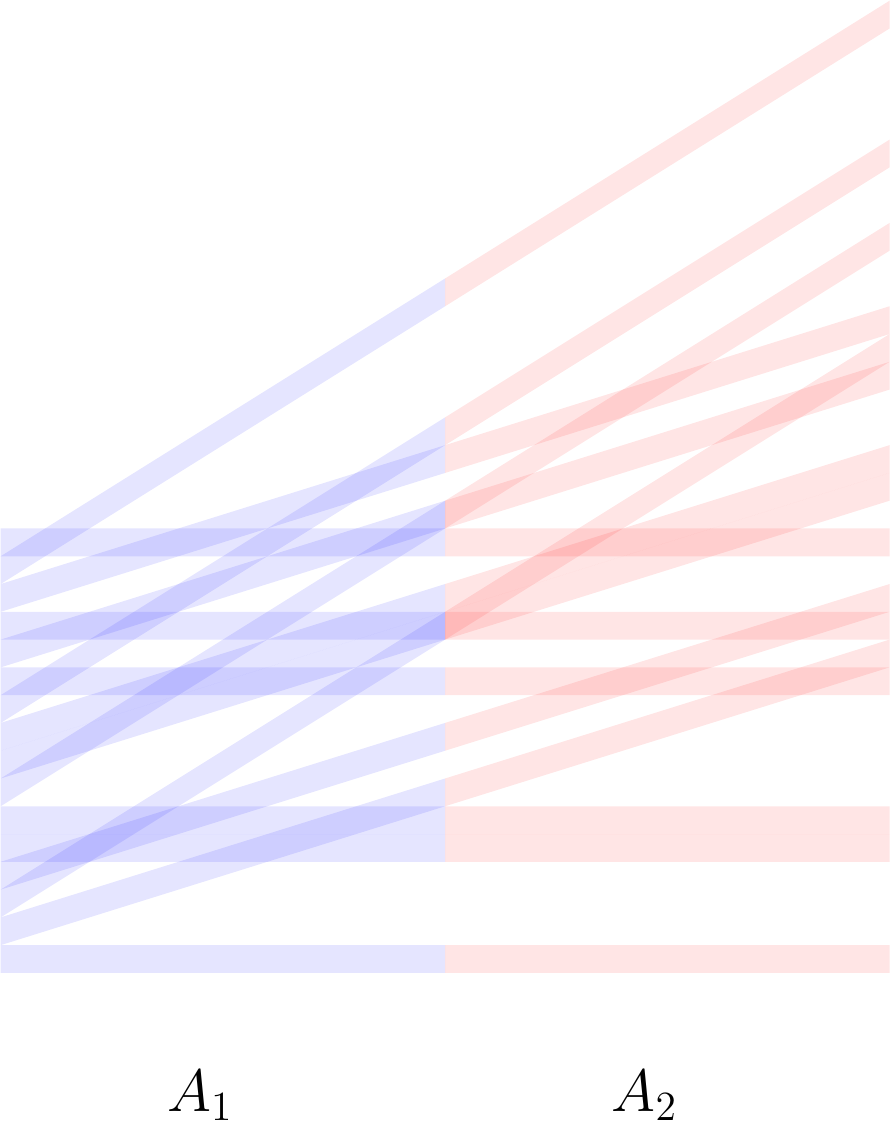}
\caption{With positive probability, the random sets $A_1$ and $A_2$ satisfies $|A_2| \gtrsim \log(n) |A_1|$. }  
\end{figure}
%%%%%%%%%%%%%%%%%%%%%
%%%%%%%%%%%%%%%%%%%%%
%%%%%%%%%%%%%%%%%%%%%
%%%%%%%%%%%%%%%%%%%%%
%%%%%%%%%%%%%%%%%%%%%
%%%%%%%%%%%%%%%%%%%%%
%%%%%%%%%%%%%%%%%%%%%
%%%%%%%%%%%%%%%%%%%%%
%%%%%%%%%%%%%%%%%%%%%

\subsection*{Step 1 : construction of $A_2(\omega)$}

For $u \in \mathcal{T}^*$, we will denote by $u'$ the parallelogram $u$ but shifted of one unit length on the right along its orientation. We fix a $2^h$ mutually independent random variables $$ r_k : (\Omega,\mathbb{P}) \rightarrow \mathcal{L}_{[\mathcal{F}]}$$ who are uniformly distributed in the set $\mathcal{L}_{[\mathcal{F}]}$ \textit{i.e.} for any $k \leq 2^h$ and any $u \in \mathcal{L}_{[\mathcal{F}]}$ we have $$\mathbb{P}(r_k = u) = 2^{-n}.$$ We define then the random set $A$ as $$A = \bigcup_{ k \leq 2^h} \Vec{t}_k + \left( r_k \cup r_k' \right) $$ where $\Vec{t}_k = (0,\frac{k-1}{2^h})$ is a deterministic vector. Define also the first and second halves of $A$ as $$A_1 = \bigcup_{ k \leq 2^h}  \left( \Vec{t}_k + r_k \right) $$ and $$ A_2 = \bigcup_{ k \leq 2^h} \left( \Vec{t}_k + r_k' \right) .$$

\subsection*{Step 2 : Bateman's estimate}

We state Bateman's main result in \cite{BATEMAN} which quantify to which point $|A_2|$ is bigger than $|A_1|$.

\begin{thm}\label{T : Main theorem Bateman}
We have $\mathbb{P}\left(\left|A_2\right| \geq \frac{\log(n)}{ C}|A_1|\right) > 0$. Here $C$ is an absolute constant.
\end{thm}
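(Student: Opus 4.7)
The strategy is a first-moment argument aimed at establishing the stronger mean inequality
\[ \mathbb{E}|A_2| \;\geq\; \frac{\log n}{C_0}\, \mathbb{E}|A_1| \]
for some absolute constant $C_0 > 0$. Once this is available, the random variable $Y := |A_2| - \frac{\log n}{2C_0}|A_1|$ satisfies $\mathbb{E}Y > 0$, so the event $\{Y > 0\}$ has positive probability and this gives the claim with $C := 2C_0$.

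By independence of $r_1, \ldots, r_{2^h}$ I expand each expectation as
\[ \mathbb{E}|A_i| \;=\; \int_{\mathbb{R}^2}\Bigl(1 - \prod_{k=1}^{2^h}(1 - p_k^{(i)}(x))\Bigr)\,dx \qquad (i=1,2), \]
where $p_k^{(1)}(x) := \mathbb{P}(x \in \vec{t}_k + r_k)$ and $p_k^{(2)}(x) := \mathbb{P}(x \in \vec{t}_k + r_k')$. Each equals $2^{-n}$ times the number of leaves of $[\mathcal{F}]$ whose (possibly shifted) translate contains $x$; in particular $\int \sum_k p_k^{(i)} = \sum_k \mathbb{E}|\vec{t}_k + r_k^{(i)}| = 1$ for both $i$, so the two densities carry the same total mass but are distributed very differently in the plane. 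The inequalities $\tfrac12\min(1,\sum_k p_k) \leq 1 - \prod_k(1-p_k) \leq \sum_k p_k$ reduce everything to a pointwise comparison of these densities.

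The geometric intuition for the gap is the following. In the left strip $[0,1]\times\mathbb{R}$ housing $A_1$, every leaf of $[\mathcal{F}]$ contains the left edge of the root, so the copies $\vec{t}_k + r_k$ are forced to overlap near the ``spine'' and the density $\sum_k p_k^{(1)}$ piles up on a set of small measure. In the right strip $[1,2]\times\mathbb{R}$ housing $A_2$, the shifted copies $\vec{t}_k + r_k'$ are pulled apart by the $2^n$ distinct leaf orientations and therefore disperse broadly over a much larger footprint. The fig-tree hypothesis — that every root-to-leaf path undergoes exactly $n$ splits — is precisely what quantifies this dispersion into a logarithmic gain.

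The main obstacle is extracting the $\log n$ factor from this picture. The trivial bound $|A_i| \leq \sum_k |\vec{t}_k + r_k^{(i)}| = 1$ is insensitive to the asymmetry between $A_1$ and $A_2$, so one must exploit the $n$-level branching structure of $[\mathcal{F}]$ via a level-by-level estimate on the densities $\sum_k p_k^{(i)}$, with the contributions of the $n$ splitting events on each root-to-leaf path accumulating in a harmonic-sum fashion to produce $\log n$. This delicate combinatorial bookkeeping over the scales of the fig tree, rather than any deep probabilistic tool, is the heart of the proof.
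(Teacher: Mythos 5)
The first thing to note is that the paper does not prove this statement at all: it is quoted from Bateman's paper \cite{BATEMAN}, and the text explicitly warns that the proof is difficult, resting on fine geometric estimates, percolation theory and the ``stickiness'' of thin tubes. Measured against that, your proposal has two problems. The lesser one is that you concede the heart of the matter yourself: the extraction of the $\log n$ factor is left as ``delicate combinatorial bookkeeping over the scales of the fig tree'', so what you have written is a plan, not a proof.

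The more serious problem is that the plan itself cannot work: for the i.i.d.\ uniform choice of leaves, the mean inequality $\mathbb{E}|A_2| \geq \frac{\log n}{C_0}\,\mathbb{E}|A_1|$ is simply false for large $n$. On one hand $|A_2| \leq \sum_k |\vec{t}_k + r_k'| \leq 2$ deterministically. On the other hand $\mathbb{E}|A_1|$ is bounded below by an absolute constant: at a point $(t,y)$ with $t \leq y \leq 1$, \emph{every} one of the $2^n$ leaves covers $(t,y)$ for at least one admissible index $k$ (the vertical translates $\vec{t}_k$ sweep the whole unit of height), so $\sum_k p_k^{(1)}(t,y) \geq 1$ while each individual $p_k^{(1)}$ is tiny; hence the coverage probability is at least $1 - e^{-1}$, and integrating gives $\mathbb{E}|A_1| \geq (1-e^{-1})/2$. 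So the two expectations differ by at most an absolute factor, never by $\log n$. The flaw is in your geometric intuition: the left copies ``pile up'' only very near the common left edge; for $x_1$ bounded away from $0$, independent uniform choices disperse the unshifted tubes exactly as much as the shifted ones. The logarithmic gap in Bateman's theorem is produced not by independence but by a \emph{sticky} random assignment (the leaf attached to row $k$ is chosen by a branching process along the dyadic structure of $k$, so rows in the same dyadic block share their initial branch choices and their tubes overlap massively on the left, while the fig-tree splitting still forces the shifted tubes apart on the right), and even then the estimate requires a percolation-type analysis of pairwise intersections rather than a first-moment computation. Finally, note that the probability space here is finite, so the statement is equivalent to the existence of a single good configuration; under the i.i.d.\ measure such configurations are atypical, which is exactly why an expectation bound over that measure cannot reach them.
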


The proof of this Theorem is difficult. It involves fine geometric estimates, percolation theory and the use of the so-called notion of \textit{stickiness} of thin tubes of the euclidean plane. We refer to \cite{BATEMAN} for its proof and for more information but we would suggest to take a look at \cite{BATEMANKATZ} first. Indeed, in \cite{BATEMANKATZ}, Bateman and Katz built a scheme of proof that is similar to the one in \cite{BATEMAN} but in a simpler setting.

\subsection*{Step 3 : the set $A_1$ is a Kakeya-type set of level $\simeq (\frac{1}{4}, \log(n)^{-1})$}

With positive probability the set $A_1$ is a Kakeya-type set of level $(\frac{1}{4}, C^2\log(n)^{-1})$ for $M_{[\mathcal{F}]}$. Indeed, pick any realisation $\omega \in  \left\{ \left|A_2\right| \geq \frac{ \log(n)}{ C} |A_1| \right\}$ and we show that $A_1 := A_1(\omega)$ is a Kakeya-type set of the desired level. Observe that by construction, for any $x \in \Vec{t}_k + r_k(\omega)' := \Vec{t}_k + r_k'$, we have $$\frac{1}{|\Vec{t}_k + 2r_k'|} \int_{\Vec{t}_k + 2r_k'} \mathbb{1}_{A_1}(y)dy > \frac{| \left\{\Vec{t}_k + 2r_k' \right\} \cap \left\{\Vec{t}_k + r_k \right\}|}{4|r_k|} = \frac{1}{4} $$ and so $$A_2 \subset \left\{ M_{[\mathcal{F}]}\mathbb{1}_{A_1} > \frac{1}{4} \right\}.$$ Since we also have $|A_2| \geq \frac{\log(n)}{C} |A_1|$ this shows that $A_1$ is a Kakeya-type set of level $(\frac{1}{4}, C\log(n)^{-1})$.

\section{Geometric estimates}\label{S:geometric estimate}

We need different geometric estimates in order to prove Theorem \ref{ T : main theorem}. We start with geometric estimates on $\mathbb{R}$ which will help us to prove geometric estimates on $\mathbb{R}^2$. Finally we prove a geometric estimate on $\mathbb{R}^2$ involving geometric maximal operators that is crucial.

\subsection*{Geometric estimates on $\mathbb{R}$}

If $I$ is a bounded interval on $\mathbb{R}$ and $\tau >0$ we denote by $\tau I$ the interval that has the same center as $I$ and $\tau$ times its length \textit{i.e.} $ \left|\tau I \right| = \tau \left| I \right|$. The following lemma can be found in \cite{AUSTIN}.

\begin{lemma}[Austin's covering lemma]
Let $\{ I_\alpha \}_{\alpha \in A}$ a finite family of bounded intervals on $\mathbb{R}$. There is a disjoint subfamily $$\{ I_{ \alpha_k} \}_{k \leq N} $$ such that $$ \bigcup_{\alpha \in A} I_\alpha \subset  \bigcup_{k \leq N} 3 I_{\alpha_k} $$
\end{lemma}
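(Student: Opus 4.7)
The plan is to prove this by a greedy (Vitali-type) selection, exploiting the one-dimensional fact that if two intervals meet and one is no shorter than the other, then the shorter interval sits inside three times the longer one.

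First I would reorder the finite family by decreasing length, writing
$$|I_{\beta_1}| \geq |I_{\beta_2}| \geq \dots \geq |I_{\beta_M}|,$$
where $\{\beta_1,\dots,\beta_M\}$ is a reindexing of $A$. I would then run the usual greedy procedure: initialize $\mathcal{S}=\{I_{\beta_1}\}$, and for $k=2,\dots,M$ add $I_{\beta_k}$ to $\mathcal{S}$ if and only if it is disjoint from every element already in $\mathcal{S}$. The resulting subfamily $\mathcal{S}=\{I_{\alpha_1},\dots,I_{\alpha_N}\}$ is disjoint by construction.

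Next I would check the covering property. Fix $\alpha\in A$. Either $I_\alpha$ was retained, in which case $I_\alpha\subset 3I_\alpha$ trivially; or $I_\alpha$ was rejected at step $k$, meaning it meets some $I_{\alpha_j}\in\mathcal{S}$ already chosen before $I_\alpha$ was considered. Because the algorithm processes intervals in decreasing order of length, necessarily $|I_{\alpha_j}|\geq |I_\alpha|$. It then remains to verify the elementary claim: if two bounded intervals $I,J\subset\mathbb{R}$ satisfy $I\cap J\neq\emptyset$ and $|I|\leq|J|$, then $I\subset 3J$. For any $x\in I$, pick $y\in I\cap J$; the triangle inequality together with $|x-y|\leq|I|\leq|J|$ and $|y-c_J|\leq\tfrac{1}{2}|J|$ gives $|x-c_J|\leq\tfrac{3}{2}|J|$, so $x\in 3J$. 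Applying this to $I_\alpha$ and $I_{\alpha_j}$ completes the proof.

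I would not expect any real obstacle here; the argument is the one-dimensional incarnation of the Vitali lemma, and the constant $3$ is precisely what a greedy length-ordered selection delivers. The two points that deserve a line of attention are (i) recording that reindexing by length is legitimate because the family is \emph{finite}, so termination and the decreasing-length property are automatic, and (ii) stating the elementary containment lemma above as a self-contained step, since it is the only geometric ingredient of the proof.
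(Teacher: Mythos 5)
Your proof is correct: the greedy selection in decreasing order of length, combined with the elementary fact that an interval meeting a longer interval $J$ lies in $3J$, is exactly the standard (Vitali-type) argument for this one-dimensional covering lemma. The paper itself offers no proof and simply cites Austin \cite{AUSTIN}, whose argument is essentially the one you give, so there is nothing to reconcile.
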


We apply Austin's covering lemma to prove two geometric estimates on intervals of the real line. The first one concerns union of dilated intervals.

\begin{lemma}\label{ L : interval dil }
Fix $\tau > 0$ and let $\{ I_\alpha \}_{\alpha \in A}$ a finite family of bounded intervals on $\mathbb{R}$. We have $$B_\tau \times \left|\bigcup_{\alpha \in A} \tau I_\alpha \right| \leq  \left|\bigcup_{\alpha \in A} I_\alpha \right| \leq C_\tau \times \left|\bigcup_{\alpha \in A} \tau I_\alpha \right|$$ where $C_\tau = \sup \{ \tau , \frac{1}{\tau} \}$ and $B_\tau = \inf \{ \tau , \frac{1}{\tau} \}$. In other words we have $$  \left|\bigcup_{\alpha \in A} I_\alpha \right| \simeq_{\tau} \left|\bigcup_{\alpha \in A} \tau I_\alpha \right|.$$
\end{lemma}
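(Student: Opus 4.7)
The plan is to reduce by symmetry to the case $\tau \geq 1$ and then dispatch the two inequalities separately. For the reduction, if $\tau < 1$, set $\tau' = 1/\tau > 1$ and apply the case $\tau \geq 1$ to $\tau'$ and to the family $\{\tau I_\alpha\}_{\alpha \in A}$; since $\tau'(\tau I_\alpha) = I_\alpha$, the roles of $B_\tau$ and $C_\tau$ swap as prescribed by the statement, so the case $\tau \geq 1$ implies the case $\tau < 1$. Henceforth assume $\tau \geq 1$, in which case $C_\tau = \tau$ and $B_\tau = 1/\tau$.

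The upper bound $|\bigcup_\alpha I_\alpha| \leq \tau |\bigcup_\alpha \tau I_\alpha|$ is immediate, since $I_\alpha \subset \tau I_\alpha$ already yields the stronger $|\bigcup_\alpha I_\alpha| \leq |\bigcup_\alpha \tau I_\alpha|$. The substantive content is the reverse inequality $|\bigcup_\alpha \tau I_\alpha| \leq \tau |\bigcup_\alpha I_\alpha|$. I would prove this by decomposing $U := \bigcup_\alpha I_\alpha$ into its finitely many maximal open connected components $U = \bigsqcup_j J_j$. Each $I_\alpha$ lies in exactly one component $J_{j(\alpha)}$, and since $\tau I_\alpha$ extends past each endpoint of $I_\alpha$ by $(\tau - 1)|I_\alpha|/2 \leq (\tau - 1)|J_{j(\alpha)}|/2$, the union $\bigcup_{\alpha \,:\, I_\alpha \subset J_j} \tau I_\alpha$ is contained in the interval concentric with $J_j$ of length $\tau |J_j|$. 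Summing measures gives $|\bigcup_\alpha \tau I_\alpha| \leq \sum_j \tau |J_j| = \tau |U|$, which is exactly the announced sharp constant.

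An alternative route, more consistent with the surrounding discussion, is to apply Austin's covering lemma to the family $\{\tau I_\alpha\}_\alpha$: this extracts a disjoint subfamily $\{\tau I_{\alpha_k}\}_{k \leq N}$ whose $3$-dilates cover $\bigcup_\alpha \tau I_\alpha$, and the disjointness of the $\tau I_{\alpha_k}$'s forces that of the $I_{\alpha_k}$'s (because $\tau \geq 1$ gives $I_{\alpha_k} \subset \tau I_{\alpha_k}$). One then obtains $|\bigcup_\alpha \tau I_\alpha| \leq 3\tau \sum_k |I_{\alpha_k}| \leq 3\tau |\bigcup_\alpha I_\alpha|$, which still yields the qualitative comparability $|\bigcup I_\alpha| \simeq_\tau |\bigcup \tau I_\alpha|$, merely with a looser constant. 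I do not foresee any real obstacle; the sole point requiring care is the symmetry reduction, where one must remember that the family itself is replaced when swapping $\tau$ for $1/\tau$.
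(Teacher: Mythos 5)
Your proof is correct, but it takes a genuinely different route from the paper. The paper handles the essential inequality $\left|\bigcup_\alpha \tau I_\alpha\right| \leq \tau \left|\bigcup_\alpha I_\alpha\right|$ (for $\tau>1$) in one line: it observes that $\bigcup_\alpha \tau I_\alpha \subset \{ M\mathbb{1}_{\cup_\alpha I_\alpha} > \tau^{-1}\}$ for the one-dimensional Hardy--Littlewood maximal operator $M$ and then invokes the weak type $(1,1)$ maximal theorem; Austin's covering lemma is reserved for the next lemma. Your main argument instead decomposes $U=\bigcup_\alpha I_\alpha$ into its finitely many connected components $J_j$ and notes that each dilate $\tau I_\alpha$ with $I_\alpha\subset J_j$ sits inside the concentric dilate $\tau J_j$; this is completely elementary, bypasses the maximal theorem, and yields the constant $\tau$ exactly --- indeed it is a bit tighter than the paper's route, since the weak $(1,1)$ bound for the uncentered maximal operator carries an extra absolute constant, so the paper's one-liner as written gives $C\tau$ rather than $\tau$ (harmless here, since only the comparability $\simeq_\tau$ is ever used). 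Your alternative argument via Austin's lemma, with the looser constant $3\tau$, is closer in spirit to the covering-lemma toolkit the paper sets up and is equally sufficient for the application. Your symmetry reduction $\tau\mapsto 1/\tau$, applied to the rescaled family $\{\tau I_\alpha\}_\alpha$, is carried out correctly, and it in fact fills in a case the paper's proof leaves implicit.
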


\begin{proof}

Suppose that $\tau > 1$. We just need to prove that $$ \left|\bigcup_{\alpha \in A} \tau I_\alpha \right| \leq \tau \left|\bigcup_{\alpha \in A} I_\alpha \right|.$$ Simply observe that we have $$ \bigcup_{\alpha \in A} \tau I_\alpha \subset \left\{ M \mathbb{1}_{ \cup_{ \alpha \in A} I_\alpha } > \frac{1}{\tau}\right\} $$ and apply the one dimensional maximal Theorem.

\end{proof}

Now that we have dealt with union of dilated intervals we consider union of translated intervals.

\begin{lemma}\label{L : interval}
Let $\mu> 0$ be a positive constant. For any finite family of intervals $\left\{I_\alpha \right\}_{\alpha \in A}$ on $\mathbb{R}$ and any finite family of scalars $\left\{t_\alpha \right\}_{\alpha \in A} \subset \mathbb{R}$ such that, for all $\alpha \in A$ $$ |t_\alpha| < \mu\times |I_\alpha|$$ we have $$  \left|\bigcup_{\alpha \in A} I_\alpha \right| \simeq_{\mu} \left|\bigcup_{\alpha \in A} \left( t_\alpha + I_\alpha \right) \right|.$$
\end{lemma}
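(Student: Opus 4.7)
The plan is to reduce Lemma \ref{L : interval} directly to the dilation estimate already established in Lemma \ref{ L : interval dil }. The key geometric observation is that a small translation of an interval is swallowed by a slightly larger concentric dilation.

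First I would record the two inclusions that drive the argument. Let $c_\alpha$ denote the center of $I_\alpha$ and set $\tau := 1 + 2\mu$. Since $|t_\alpha| < \mu |I_\alpha|$, the center $c_\alpha + t_\alpha$ of $t_\alpha + I_\alpha$ lies within $\mu|I_\alpha|$ of $c_\alpha$, so
$$t_\alpha + I_\alpha \;\subset\; \tau I_\alpha \qquad \text{and, symmetrically,} \qquad I_\alpha \;\subset\; \tau (t_\alpha + I_\alpha),$$
because in each case the ambient concentric dilation extends $|I_\alpha|/2 + \mu|I_\alpha|$ on either side of the center, which is exactly $\tau|I_\alpha|/2$. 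Note that the second inclusion uses $|t_\alpha| < \mu |I_\alpha| = \mu |t_\alpha + I_\alpha|$.

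Next I would apply Lemma \ref{ L : interval dil } with this value of $\tau$ to both sides. From the first inclusion,
$$\left|\bigcup_{\alpha \in A} (t_\alpha + I_\alpha)\right| \;\leq\; \left|\bigcup_{\alpha \in A} \tau I_\alpha\right| \;\leq\; C_\tau \left|\bigcup_{\alpha \in A} I_\alpha\right|,$$
by Lemma \ref{ L : interval dil }. From the second inclusion, applied now to the family $\{t_\alpha + I_\alpha\}_{\alpha \in A}$,
$$\left|\bigcup_{\alpha \in A} I_\alpha\right| \;\leq\; \left|\bigcup_{\alpha \in A} \tau (t_\alpha + I_\alpha)\right| \;\leq\; C_\tau \left|\bigcup_{\alpha \in A} (t_\alpha + I_\alpha)\right|.$$
Combining the two displays yields the announced equivalence with constants depending only on $\mu$ (through $\tau = 1+2\mu$), which is exactly $\simeq_\mu$.

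There is essentially no main obstacle: the entire content of the lemma is the elementary fact that a shift of size $\mu|I|$ is dominated by the dilation of factor $1+2\mu$, after which the statement follows from Lemma \ref{ L : interval dil }. The only small care is in the second inclusion, where one must remember that $|t_\alpha + I_\alpha| = |I_\alpha|$ so that the hypothesis $|t_\alpha| < \mu|I_\alpha|$ can be reinterpreted with respect to the translated interval; this ensures that Lemma \ref{ L : interval dil } applies with the same $\tau$ in both directions.
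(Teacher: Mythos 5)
Your argument is correct, but it takes a genuinely different route from the paper's. The paper proves the lemma by applying Austin's covering lemma twice: first to $\{I_\alpha\}_{\alpha\in A}$ to extract a disjoint subfamily, then to the dilated intervals $\{(1+\mu)I_{\alpha_k}\}$ to extract a further subfamily whose dilates are pairwise disjoint; since each translate $t_\alpha+I_\alpha$ sits inside the dilate of $I_\alpha$, the corresponding translated intervals in that sub-subfamily are disjoint, and summing their lengths and comparing via Lemma \ref{ L : interval dil } gives the lower bound for $\left|\bigcup_\alpha (t_\alpha+I_\alpha)\right|$ (the reverse inequality being symmetric, though the paper leaves it implicit). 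You instead sandwich each translate between concentric dilates, $t_\alpha+I_\alpha\subset (1+2\mu)I_\alpha$ and $I_\alpha\subset (1+2\mu)(t_\alpha+I_\alpha)$, and invoke Lemma \ref{ L : interval dil } twice with $\tau=1+2\mu$, once in each direction. This is shorter, requires no explicit extraction of disjoint subfamilies beyond what is already encapsulated in Lemma \ref{ L : interval dil }, proves both inequalities of the equivalence explicitly, and in fact uses the right dilation factor: the paper's intermediate claim $t_\alpha+I_\alpha\subset(1+\mu)I_\alpha$ is only valid when $|t_\alpha|\le \frac{\mu}{2}|I_\alpha|$, since the concentric dilate gains only $\frac{\mu}{2}|I_\alpha|$ on each side — a harmless slip (replace $1+\mu$ by $1+2\mu$) that your choice of $\tau$ avoids. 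The only cost of your approach is that all constants are funneled through Lemma \ref{ L : interval dil }, which is irrelevant here since the statement is only claimed up to $\mu$-dependent constants.
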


\begin{proof}
We apply Austin's covering lemma to the family $\left\{ I_\alpha\right\}_{\alpha \in A}$ which gives a  disjoint subfamily $\left\{ I_{\alpha_k} \right\}_{ k \leq N}$ such that $$ \bigcup_{\alpha \in A} I_\alpha \subset \bigcup_{k \leq N} 3 I_{\alpha_k}.$$ In particular we have $$ \left| \bigsqcup_{k \leq N} I_{\alpha_k} \right| \simeq \left|\bigcup_{\alpha \in A} I_\alpha \right|.$$ We consider now the family $$  \left\{ (1+ \mu) I_{\alpha_k} \right\}_{ k \leq N} $$ which is \textit{a priori} not disjoint. We apply again  Austin's covering lemma which gives a  disjoint subfamily that we will denote $\left\{ (1+\mu) I_{\alpha_{k_l}} \right\}_{ l \leq M}$ who satisfies $$ \bigcup_{k \leq N}  (1+\mu) I_{\alpha_k} \subset \bigcup_{l \leq M} 3 (1+\mu) I_{\alpha_{k_l }}.$$ In particular we have $$ \left| \bigsqcup_{l \leq M}  (1+\mu) I_{\alpha_{k_l}} \right| \simeq \left|\bigcup_{k \leq N} (1+\mu) I_{\alpha_k} \right|.$$ To conclude, it suffices to observe that for any $\alpha \in A$ we have $$t_\alpha + I_\alpha \subset (1+\mu) I_\alpha $$ because $|t_\alpha| \leq \mu \times \left|I_\alpha\right|$. Hence the family $$ \{ t_{\alpha_{k_l}} + I_{\alpha_{k_l}}  \}_{l \leq M}$$ is disjoint and so finally $$\left| \bigsqcup_{l \leq M} \left( t_{\alpha_{k_l}} + I_{\alpha_{k_l}} \right) \right| = \sum_{l \leq M} \left|I_{\alpha_{k_l}} \right| \geq \frac{1}{3(1+\mu) } \left| \bigcup_{l \leq M} 3  (1+\mu)  I_{\alpha_{k_l}} \right| \simeq_{\mu} \left|\bigcup_{\alpha \in A} I_\alpha \right| $$ where we have used lemma \ref{ L : interval dil } in the last step.
\end{proof}

\subsection*{Geometric estimates on $\mathbb{R}^2$}

We denote by $\mathcal{S}$ the set containing \textit{all} parallelograms $u \subset \mathbb{R}^2$ whose vertices are of the form $(p,a),(p,b),(q, c)$ and $(q,d)$ where $p-q > 0$ and  $b-a = d-c > 0$. We say that $l_u := p-q$ is the \textit{length} of $u$ and that $w_u := b-a$ is the \textit{width} of $u$ ; we do not have necessarily $ l_u \geq w_u$. For $u \in \mathcal{S}$ and and a positive ratio $0 < \tau < 1$ we denote by $\mathcal{S}_{u,\tau}$ the collection defined as $$ \mathcal{S}_{u,\tau} := \left\{ s \in \mathcal{S} : s \subset u, l_s = l_u, |s| \geq \tau |u| \right\}.$$ We won't use directly the following proposition but its proof is instructive.

\begin{prp}[geometric estimate I]\label{L : First Geometric Observation}
Fix $\tau > 1$ and any finite family of parallelograms $\left\{ u_i \right\}_{ i \in I } \subset \mathcal{S}$. For each $i \in I$, select an element $s_i \in \mathcal{S}_{u_i, \tau}$. The following holds $$ \left| \bigcup_{i \in I} s_i \right| \geq \frac{\tau}{3}\left|\bigcup_{i \in I} u_i \right|.$$
\end{prp}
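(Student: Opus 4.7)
The plan is to reduce this two-dimensional estimate to a one-dimensional one via Fubini, exploiting the fact that every element of $\mathcal{S}$ has vertical left and right edges of equal height. For each $i\in I$ and each $x\in\mathbb{R}$, set
$$V_i(x) = \left\{ y\in\mathbb{R} : (x,y)\in u_i \right\}, \qquad V_i^s(x) = \left\{ y\in\mathbb{R} : (x,y)\in s_i \right\}.$$
By the definition of $\mathcal{S}$, $V_i(x)$ is an interval of length $w_{u_i}$ whenever non-empty. Since $s_i \in \mathcal{S}_{u_i,\tau}$ has the same length as $u_i$ and $|s_i|\geq\tau|u_i|$, we have $w_{s_i}\geq\tau w_{u_i}$, hence $V_i^s(x)\subset V_i(x)$ and $|V_i^s(x)|\geq\tau|V_i(x)|$ at every $x$ for which $V_i(x)\neq\emptyset$.

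Next, I would prove the pointwise one-dimensional inequality
$$\left|\bigcup_{i\in I} V_i^s(x)\right| \;\geq\; \frac{\tau}{3}\left|\bigcup_{i\in I} V_i(x)\right|$$
for each $x\in\mathbb{R}$. This follows by applying Austin's covering lemma to the finite family $\{V_i(x)\}_{i\in I}$ (restricted to non-empty fibers): one obtains a disjoint subfamily $\{V_{i_k}(x)\}_{k\leq N}$ with $\bigcup_{i} V_i(x)\subset\bigcup_k 3V_{i_k}(x)$. The corresponding sub-intervals $V_{i_k}^s(x)\subset V_{i_k}(x)$ are then also pairwise disjoint, so
$$\left|\bigcup_{i} V_i(x)\right| \leq 3\sum_{k} |V_{i_k}(x)| \leq \frac{3}{\tau}\sum_{k} |V_{i_k}^s(x)| = \frac{3}{\tau}\left|\bigsqcup_{k} V_{i_k}^s(x)\right| \leq \frac{3}{\tau}\left|\bigcup_{i} V_i^s(x)\right|.$$

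Finally, integrating the pointwise estimate in $x$ and applying Fubini delivers the claim:
$$\left|\bigcup_{i\in I} s_i\right| = \int_{\mathbb{R}} \left|\bigcup_{i} V_i^s(x)\right| dx \;\geq\; \frac{\tau}{3} \int_{\mathbb{R}} \left|\bigcup_{i} V_i(x)\right| dx = \frac{\tau}{3}\left|\bigcup_{i\in I} u_i\right|.$$
There is no serious obstacle here: the rigid slicing structure of $\mathcal{S}$ decouples the two-dimensional geometry from the varying slopes of the $u_i$'s, so the whole argument rests on the one-dimensional covering inequality. The only subtle point is the factor of $3$, which is inherited from Austin's lemma; a different choice of covering lemma would change the constant but not the structure of the argument.
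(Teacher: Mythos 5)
Your proof is correct and follows essentially the same route as the paper's: slice the parallelograms along vertical lines $\{x\}\times\mathbb{R}$ (where each nonempty fiber is an interval of length equal to the width), apply Austin's covering lemma fiberwise to get a disjoint subfamily whose corresponding $s$-slices are still disjoint, and integrate in $x$. Your extra justification that $w_{s_i}\geq\tau w_{u_i}$ (hence the fiberwise inequality $|V_i^s(x)|\geq\tau|V_i(x)|$) is a welcome clarification of a step the paper states without comment, but the argument is the same.
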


\begin{proof}
We let $U = \bigcup_{i \in I} u_i$ and $V = \bigcup_{i \in I} s_i $. Fix $x \in \mathbb{R}$ and for $i \in I$, denote by $u_i^x$ and $s_i^{x}$ the segments $u_i \cap \left\{ {x} \times \mathbb{R} \right\}$ and $s_i  \cap \left\{ {x} \times \mathbb{R} \right\}$. Observe that we have by hypothesis  $|s_i^{x}| \geq \tau |u_i^x|$. By definition, we have the following equality $$ \left|\bigcup_{i \in I} u_i^x\right| =  \int \mathbb{1}_U(x,y)dy$$ and as well as $$ \left|\bigcup_{i \in I} s_i^x\right| =  \int \mathbb{1}_V(x,y)dy.$$ We apply Austin's covering lemma to the family $\left\{ u_i^x \right\}_{i \in I}$ which gives a subfamily  $J \subset I$ such that the segments $\left\{ u_j^x \right\}_{j \in J} $ are disjoint intervals satisfying $$ \bigcup_{i \in I} u_i^x \subset \bigcup_{j \in J} 3 u_j^x.$$ This yields $$\left|\bigcup_{i \in I} s_i^{x}\right| \geq \sum_{j \in J} \left|s_j^x \right| \geq \frac{\tau}{3} \left|\bigcup_{i \in I} u_i^x \right|. $$ An integration over $x \in \mathbb{R}$ concludes the proof.
\end{proof}

We aim to give a more general version of proposition \ref{L : First Geometric Observation} using lemma \ref{ L : interval dil } and \ref{L : interval}. For $u \in \mathcal{S}$ define the parallelogram $h_u \in \mathcal{S}$ as \textit{the parallelogram who has same length, orientation and center than $u$ but is $5$ times wider} \textit{i.e.} $w_{h_u} = 5w_p$.

%%%%%%%%%%%%%%%%%%%%%
%%%%%%%%%%%%%%%%%%%%%
%%%%%%%%%%%%%%%%%%%%%
%%%%%%%%%%%%%%%%%%%%%
%%%%%%%%%%%%%%%%%%%%%
%%%%%%%%%%%%%%%%%%%%%
%%%%%%%%%%%%%%%%%%%%%
%%%%%%%%%%%%%%%%%%%%%
%%%%%%%%%%%%%%%%%%%%%
\begin{figure}[h!]
\centering
\includegraphics[scale=0.9]{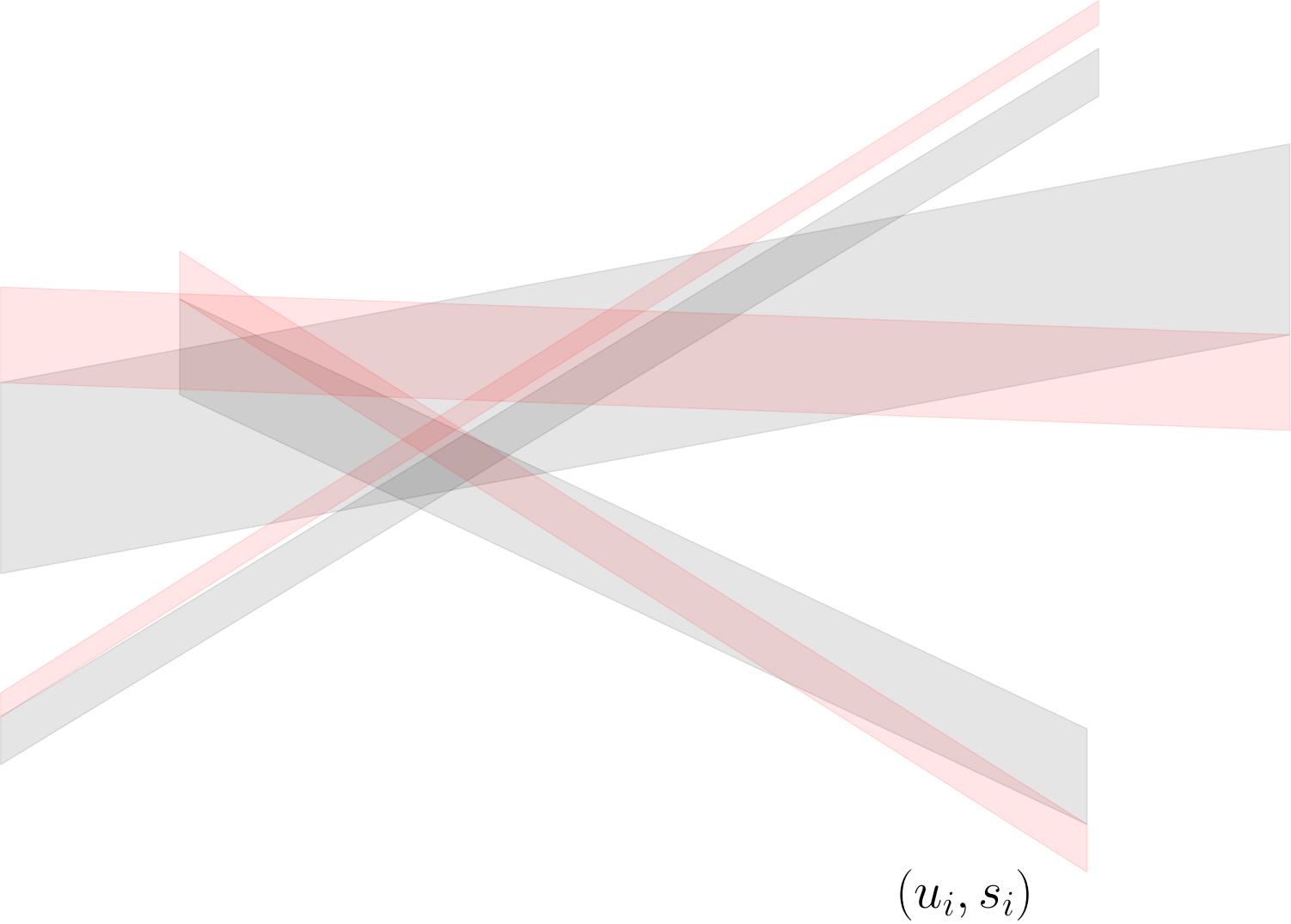}
\caption{An illustration of Proposition \ref{ prp : estimate geometric ; version décalé} with parameter $\tau = \frac{1}{4}$. The red area is bigger than $\simeq \tau$ times the grey area.}  
\end{figure}
%%%%%%%%%%%%%%%%%%%%%
%%%%%%%%%%%%%%%%%%%%%
%%%%%%%%%%%%%%%%%%%%%
%%%%%%%%%%%%%%%%%%%%%
%%%%%%%%%%%%%%%%%%%%%
%%%%%%%%%%%%%%%%%%%%%
%%%%%%%%%%%%%%%%%%%%%
%%%%%%%%%%%%%%%%%%%%%
%%%%%%%%%%%%%%%%%%%%%

\begin{prp}[geometric estimate II]\label{ prp : estimate geometric ; version décalé}
Fix $ 0 < \tau < 1$ and any finite family of parallelograms $\left\{u_i\right\}_{i \in I} \subset \mathcal{S}$. For each $i \in I$, select an element $s_i \in \mathcal{S}_{h_{u_i}, \tau}$. The following estimate holds $$\left|\bigcup_{i \in I} s_i \right| \geq \frac{\tau}{54}\left|\bigcup_{i \in I} u_i\right|.$$
\end{prp}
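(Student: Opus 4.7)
The plan is to obtain Proposition~\ref{ prp : estimate geometric ; version décalé} as an immediate corollary of Proposition~\ref{L : First Geometric Observation}, applied not to the family $\{u_i\}_{i \in I}$ but to the enlarged family $\{h_{u_i}\}_{i \in I}$. For this to be legitimate I would first check that $h_{u_i} \in \mathcal{S}$: since the widening $u \mapsto h_u$ preserves the length, orientation, and center of $u$ while multiplying only the width by $5$, the two vertical sides of $u$ remain vertical in $h_u$, so $h_u \in \mathcal{S}$ and moreover $l_{h_u} = l_u$. Consequently the collections $\mathcal{S}_{h_{u_i},\tau}$ are well defined, and the hypothesis $s_i \in \mathcal{S}_{h_{u_i}, \tau}$ is \emph{precisely} what Proposition~\ref{L : First Geometric Observation} requires of the inner selections when the outer parallelograms are the $h_{u_i}$.

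Invoking Proposition~\ref{L : First Geometric Observation} on $\{h_{u_i}\}_{i\in I}$ with the given choice of $s_i$ then yields
$$\left|\bigcup_{i \in I} s_i\right| \;\geq\; \frac{\tau}{3}\left|\bigcup_{i \in I} h_{u_i}\right| \;\geq\; \frac{\tau}{3}\left|\bigcup_{i \in I} u_i\right|,$$
where the second inequality is the trivial containment $u_i \subset h_{u_i}$. This in particular implies the stated bound $\tau/54$ (with a comfortable margin).

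I do not anticipate any real obstacle: the content of the proposition is, essentially, the observation that the extra room encoded in the factor $5$ of $h_{u_i}$ is invisible to the slice-by-slice Austin argument used to prove the earlier proposition, and all that needs checking is the compatibility of definitions ($h_{u_i}\in\mathcal{S}$ and $l_{h_{u_i}}=l_{u_i}$). Should one prefer a self-contained argument instead of quoting Proposition~\ref{L : First Geometric Observation}, the same slice strategy carries through: for each $x\in\mathbb{R}$ one has $s_i^x \subset h_{u_i}^x = 5\,u_i^x$, and two successive applications of Austin's covering lemma — first to $\{u_i^x\}_i$ and then to the $5$-dilates $\{5\,u_{i_k}^x\}_k$ — produce a sub-subfamily whose corresponding $s_{i_{k_l}}^x$-slices are pairwise disjoint, after which summing the lengths and integrating in $x$ via Fubini closes the argument.
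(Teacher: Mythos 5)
Your proof is correct, and it takes a genuinely shorter route than the paper. The paper does not deduce Proposition \ref{ prp : estimate geometric ; version décalé} from Proposition \ref{L : First Geometric Observation}: it slices vertically, finds for each $i$ a translate $t_i + \tau u_i^x \subset s_i^x$ of the dilated slice, and then invokes the translated-interval Lemma \ref{L : interval} (with $\mu=5$) together with the dilation Lemma \ref{ L : interval dil }, which is where the constant $54$ comes from. Your observation --- that $h_{u_i} \in \mathcal{S}$ with $l_{h_{u_i}} = l_{u_i}$, so the hypothesis $s_i \in \mathcal{S}_{h_{u_i},\tau}$ is exactly the hypothesis of Proposition \ref{L : First Geometric Observation} applied to the widened family $\{h_{u_i}\}_{i\in I}$ --- short-circuits all of this: combined with the trivial containment $\bigcup_i u_i \subset \bigcup_i h_{u_i}$ it yields the better constant $\tau/3$, hence a fortiori $\tau/54$, without using Lemmas \ref{ L : interval dil } and \ref{L : interval} at all. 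One caveat you should make explicit: Proposition \ref{L : First Geometric Observation} is stated in the paper with ``Fix $\tau > 1$'', evidently a typo (for $\tau>1$ the class $\mathcal{S}_{u,\tau}$ is empty since $s\subset u$ forces $|s|\leq |u|$); its proof is valid verbatim for $0<\tau<1$, which is the range you need, and your fallback slice argument (Austin's lemma applied to the slices $h_{u_i}^x = 5\,u_i^x$, with disjointness of the selected $s$-slices inherited from disjointness of the chosen $h$-slices, then integration in $x$) settles this point independently. What the paper's longer route buys is essentially the extra flexibility of Lemma \ref{L : interval}, which tolerates non-concentric translates; for the statement at hand, where each $s_i$ lies in the concentric widening $h_{u_i}$ and spans its full length, that flexibility is not needed.
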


\begin{proof}
As in the proof of lemma \ref{L : First Geometric Observation}, denote $U = \bigcup_{i \in I} u_i$ and $V = \bigcup_{i \in I} s_i $. Fix $x \in \mathbb{R}$ and for $i \in I$, denote by $u_i^x$ and $s_i^{x}$ the segments $u_i \cap \left\{ {x} \times \mathbb{R} \right\}$ and $s_i  \cap \left\{ {x} \times \mathbb{R} \right\}$. For any $i \in I$, observe that there is a scalar $t_i$ satisfying $|t_i| \leq \mu \times |u_i|$ with $$ \mu = 5 $$ such that $$t_i + \tau u_i^x \subset s_i^x.$$ Applying lemma \ref{L : interval}, we then have (since $9 \times (1+\mu) = 54$) $$\left|\bigcup_{i \in I} s_i^x\right| \geq \left| \bigcup_{i \in I} \left(t_i + \tau u_i^x\right) \right| \geq \frac{1}{54} \left|\bigcup_{i \in I} \tau u_i^x\right|.$$ We conclude using lemma \ref{ L : interval dil } $$\frac{1}{54} \left|\bigcup_{i \in I} \tau u_i^x\right| \geq \frac{\tau}{54}  \left|\bigcup_{i \in I}  u_i^x\right|$$ by integrating on $x$ as before. 

\end{proof}

\subsection*{Geometric estimate involving a maximal operator}

We state a last geometric estimate involving maximal operator that will turn out to be crucial and we begin by a specific case. Consider $u := [0,1]^2$, $u' := \overrightarrow{(1,0)} + u$ and any element $v \in \mathcal{S}$ included in $u$ such that $l_v = l_u $ and $|v| \leq \frac{1}{2}|u|$.

\begin{prp}
There is a parallelogram $s \in \mathcal{S}_{h_u, \frac{1}{4}}$ depending on $v$ such that the following inclusion holds $$s \subset \left\{ M_v \mathbb{1}_{u'} > \frac{1}{16} \right\}.$$
\end{prp}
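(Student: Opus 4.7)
The plan is to import Step 3 of Bateman's construction (Theorem \ref{ T : main thm bateman }): there, the 2-dilate $2 r_k'$ of a shifted leaf catches both $r_k$ and $r_k'$, delivering ratio $1/4$. Here, for each $(x_0, y_0) \in s$, I will exhibit a parallelogram $R \in \mathcal{B}_v^*$ — a suitably dilated and translated copy of $v$ — with $(x_0, y_0) \in R$ and $|R \cap u'|/|R| > 1/16$. The margin $1/16 = (1/4)^2$ (rather than Bateman's $1/4$) is precisely what absorbs the geometric mismatch between the horizontal shift $(1, 0)$ relating $u$ and $u'$, and the slant direction $(1, \sigma)$ of $v$, where $\sigma := a - c$.

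Step 1 — construction of $s$. I take $s$ to be the parallelogram in $\mathcal{S}$ with vertical sides at $x = 0$ and $x = 1$, slant direction $(1, \sigma)$ (the same direction as $v$), length $l_s = 1$, and width $w_s = 5/4$, with vertical centerline through the point $(1/2, 1/2)$. Since $|\sigma| \leq 1 - w \leq 1$, the four vertices of $s$ lie in $[0, 1] \times [-5/8, 13/8] \subset h_u$, and $|s| = 5/4 = |h_u|/4$ combined with $l_s = l_{h_u}$ gives $s \in \mathcal{S}_{h_u, 1/4}$.

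Step 2 — choice of $R_{(x_0, y_0)}$. For each $(x_0, y_0) \in s$ the natural candidate is the translated $2$-dilate $R_2 := 2v + (0, y_0 - 2c - \sigma x_0 - w) \in \mathcal{B}_v^*$, which has $x$-range $[0, 2]$, area $4w$, and contains $(x_0, y_0)$ at the mid-height of its $y$-slice at $x = x_0$. Its $y$-slice at $x \in [0, 2]$ is the interval of length $2w$ centered at $y_0 + \sigma(x - x_0)$, so the heart of the proof is to bound
\[
|R_2 \cap u'| = \int_1^2 \bigl|[\,y_0 + \sigma(x - x_0) - w,\; y_0 + \sigma(x - x_0) + w\,] \cap [0, 1] \bigr| \, dx > \frac{w}{4} = \frac{|R_2|}{16}
\]
via the one-dimensional estimates of Lemmas \ref{ L : interval dil } and \ref{L : interval}. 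For those rare $(x_0, y_0) \in s$ (near the boundary of $s$) where the trajectory $x \mapsto y_0 + \sigma(x - x_0)$ leaves $[0, 1]$ too quickly, I replace $R_2$ by a larger dilate $R_h := hv + \vec{t}_h$ with $h \geq 2$ chosen so that the thicker $y$-slice of $R_h$ (now of width $hw$) compensates for the slant excursion. A direct two-dimensional overlap bound in the spirit of Proposition \ref{ prp : estimate geometric ; version décalé} then still yields ratio $> 1/16$.

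Step 3 — conclusion and main obstacle. In either case, $(x_0, y_0) \in R_{(x_0, y_0)} \in \mathcal{B}_v^*$ gives
\[
M_v \mathbb{1}_{u'}(x_0, y_0) \geq \frac{|R_{(x_0, y_0)} \cap u'|}{|R_{(x_0, y_0)}|} > \frac{1}{16},
\]
so $s \subset \{M_v \mathbb{1}_{u'} > 1/16\}$. The main obstacle is the uniform overlap estimate — in particular, the case analysis deciding when $R_2$ suffices and when a larger $h$ is needed, and verifying the $1/16$ margin in the latter case. The width $5/4$ of $s$ and the extension factor $5$ in $h_u$ are calibrated exactly for this worst-case bookkeeping to close.
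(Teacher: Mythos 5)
Your Step 2 is not a proof: you yourself flag the uniform overlap bound $|R\cap u'|>\frac{1}{16}|R|$, together with the case analysis over the larger dilates $R_h$, as ``the heart of the proof'' and ``the main obstacle'', and none of it is carried out. But the problem is more serious than an omitted computation: the set $s$ chosen in your Step 1 is in general \emph{not} contained in $\left\{M_v\mathbb{1}_{u'}>\frac{1}{16}\right\}$, so no bookkeeping in Step 2 can close the argument. Take $v$ with vertices $(0,0),(0,w),(1,1-w),(1,1)$, i.e. vertical width $w$ and slope $\sigma=1-w$, with $w$ small (say $w=10^{-2}$); this $v$ is admissible since $|v|=w\leq\frac12$. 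Every $R=\vec t+hv$ is a slab of slope $\sigma$ whose vertical slices have length $hw$. If $R$ contains a point $(x_0,y_0)$ lying \emph{above} the line $y=1+\sigma(x-1)$ through the upper left corner $(1,1)$ of $u'$, then for $x\in[1,2]$ the vertical slice of $R$ meets $[0,1]$ in a set of length at most $\max\left(0,\,hw-\sigma(x-1)\right)$, hence $|R\cap u'|\leq (hw)^2/(2\sigma)$ while $|R|=h^2w$, so $M_v\mathbb{1}_{u'}(x_0,y_0)\leq \frac{w}{2\sigma}<\frac{1}{16}$, uniformly in $h$ and in the translate. Now the centerline of your $s$, $y=\frac12+\sigma\left(x-\frac12\right)$, lies only $(1-\sigma)/2=w/2$ below this critical line, so roughly the upper half of your $s$ (vertical width $\frac58-\frac w2$ out of $\frac54$) consists of points where $M_v\mathbb{1}_{u'}\lesssim w$. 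The inclusion you claim is therefore false for this $v$: the choice of $s$ centered at $(1/2,1/2)$ cannot be repaired by a finer overlap estimate.

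The failure is structural: $s$ must be placed in a $v$-dependent way, inside the ``shadow'' of $u'$ along the direction of $v$, and its width cannot be taken as large as $\frac54$ for thin $v$ (for $v=[0,1]\times[0,w]$ the superlevel set meets each vertical line $x=x_0\in[0,1]$ in a set of measure at most $1+32w<\frac54$, so the literal reading of $\mathcal{S}_{h_u,\frac14}$ as $|s|\geq\frac14|h_u|=\frac54$, which calibrated your construction, is not attainable; what is actually used later, in Proposition \ref{ prp : estimate geometric ; version décalé}, is that every vertical slice of $s$ has length at least $\frac14 w_u$ and that $s\subset h_u$, and this is what the paper's construction delivers). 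The paper's proof avoids your per-point case analysis entirely: let $\tilde v=\vec t+2v$ be the $2$-dilate of $v$ translated so that its center is the corner $(1,0)$ of $u'$. Its upper right quarter is a copy of $v$ sitting inside $u'$ exactly as $v$ sits inside $u$, and using $v\subset u$ together with $|v|\leq\frac12|u|$ one checks that for every $0\leq y\leq\frac12$ the translate $(0,y)+\tilde v$ still carries at least a $\frac1{16}$ fraction of its area inside $u'$. Setting $v^*=\tilde v\cap\left([0,1]\times\mathbb{R}\right)$ and $s=\bigcup_{0\leq y\leq 1/2}\left((0,y)+v^*\right)$, every point of $s$ lies in some $(0,y)+\tilde v$, an element of the derivation basis generated by $v$ with mass ratio $>\frac1{16}$ on $u'$, which gives the inclusion at once; note that this $s$ is a slab of slope $\sigma$ of width $2w_v+\frac12$ anchored at $(1,0)$, i.e. below the critical line above. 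If you wish to keep your scheme, you must both relocate $s$ in this way and actually prove the overlap bound, for the single one-parameter family of vertical translates of $\tilde v$ rather than for a point-dependent choice of dilation.
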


\begin{proof}

Without loss of generality, we can suppose that the lower left corner of $v$ is $O$. The upper left corner of $v$ is the point $(0,w_v)$ and we denote by $(d,1)$ and $(d+w_v,1)$ its lower right and upper right corners. Since $v \subset u$ we have $$d + w_v \leq 1.$$ The upper right corner of $\frac{1}{2}v$ is the point $(\frac{1}{2} (d + w_v), \frac{1}{2})$ and so for any $0 \leq y \leq 1 - \frac{1}{2}(d+w_v) $ we have $$(0,y) + \frac{1}{2}v \subset u.$$ This yields our inclusion as follow. Let $\Vec{t} \in \mathbb{R}^2$ be a vector such that the center of the parallelogram $\Tilde{v} = \Vec{t} + 2v$ is the point $(1,0)$. By construction we directly have $$| \Tilde{v} \cap u'| \geq \frac{1}{16}$$ but moreover for any $0 \leq y \leq \frac{1}{2}$ we have $$|  \left\{ (0,y) + \Tilde{v} \right\}  \cap c'| \geq \frac{1}{16}$$ since the upper right quarter of $\Tilde{v}$ is relatively to $u'$ in the same position than $v$ relatively to $u$. Finally, denoting by $v^*$ the parallelogram $\Tilde{v} \cap [0,1] \times \mathbb{R} $, the parallelogram $s$ defined as $$s := \bigcup_{0 \leq y \leq  \frac{1}{2}} \left( (0,y) + v^* \right)  $$ satisfies the condition claimed. This concludes the proof.
\end{proof}

We state now the previous proposition in its general form. We fix an arbitrary element $u \in \mathcal{P}$ and an element $v \in \mathcal{S}$ included in $u$ such that $l_v = l_u$ and $|v| \leq \frac{1}{2}|u|$. Recall that we denote by $u'$ the parallelogram $u$ translated of one unit length in its direction.
  
%%%%%%%%%%%%%%%%%%%%%
%%%%%%%%%%%%%%%%%%%%%
%%%%%%%%%%%%%%%%%%%%%
%%%%%%%%%%%%%%%%%%%%%
%%%%%%%%%%%%%%%%%%%%%
%%%%%%%%%%%%%%%%%%%%%
%%%%%%%%%%%%%%%%%%%%%
%%%%%%%%%%%%%%%%%%%%%
%%%%%%%%%%%%%%%%%%%%%
\begin{figure}[h!]
\centering
\includegraphics[scale=0.9]{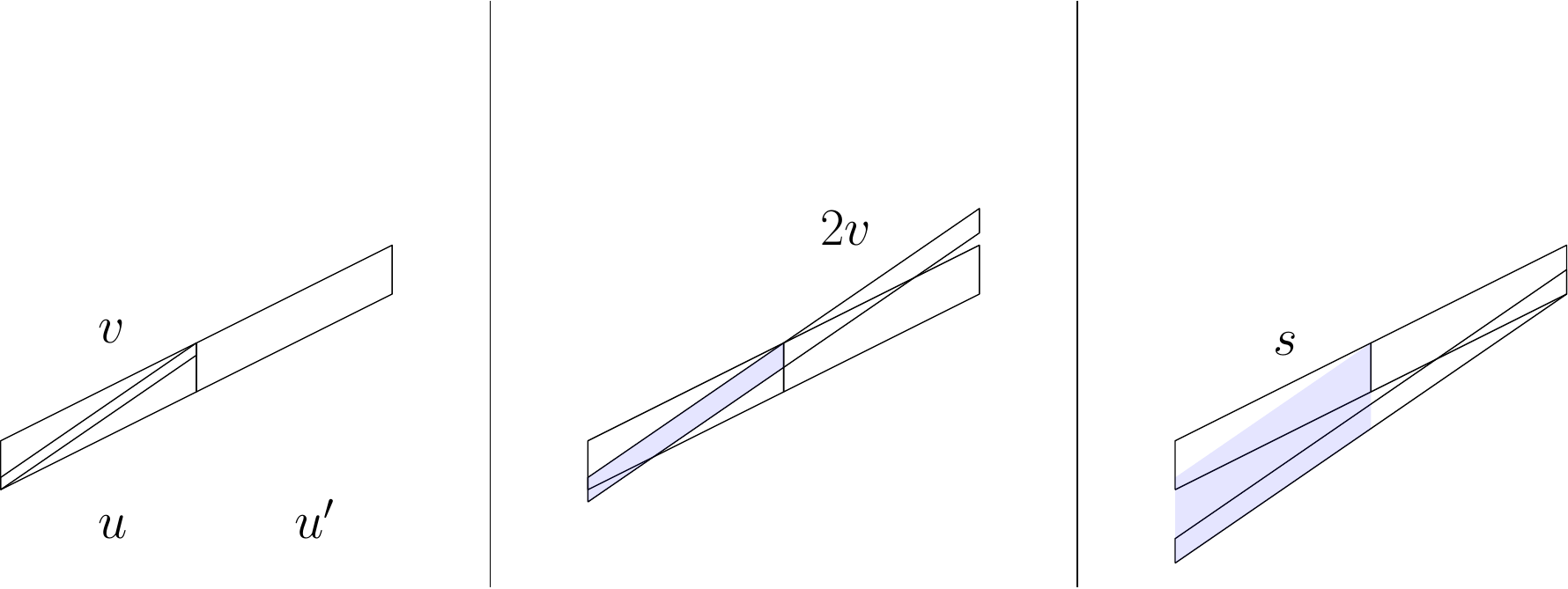}
\caption{An illustration of Proposition \ref{L : final lemma ge}.}  
\end{figure}
%%%%%%%%%%%%%%%%%%%%%
%%%%%%%%%%%%%%%%%%%%%
%%%%%%%%%%%%%%%%%%%%%
%%%%%%%%%%%%%%%%%%%%%
%%%%%%%%%%%%%%%%%%%%%
%%%%%%%%%%%%%%%%%%%%%
%%%%%%%%%%%%%%%%%%%%%
%%%%%%%%%%%%%%%%%%%%%
%%%%%%%%%%%%%%%%%%%%%
 
\begin{prp}\label{L : final lemma ge}
There is parallelogram $s \in \mathcal{S}_{ h_u,\frac{1}{4}}$ depending on $v$ such that the following inclusion holds $$  s \subset \left\{ M_v \mathbb{1}_{u'} > \frac{1}{4} \right\}.$$
\end{prp}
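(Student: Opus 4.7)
My plan is to reduce the statement to the specific case just established via an affine change of coordinates that preserves the class $\mathcal{S}$. For a generic $u \in \mathcal{S}$ with lower-left vertex $(q,a)$, length $l_u$, width $w_u$, and shear parameter $\sigma$ describing the slope of its horizontal edges, let $L : \mathbb{R}^2 \to \mathbb{R}^2$ be the affine bijection
\[
L(x,y) := (q + l_u x,\; a + \sigma x + w_u y).
\]
Then $L$ sends $u_0 := [0,1]^2$ onto $u$ and $[1,2] \times [0,1]$ onto $u'$, its linear part $L_0$ satisfies $|\det L_0| = l_u w_u = |u|$, and it maps vertical lines to vertical lines, so it preserves $\mathcal{S}$ together with the quantities $l_{\cdot}$ and $w_{\cdot}$ up to the uniform factors $l_u$ and $w_u$ respectively.

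Setting $v_0 := L^{-1}(v) \subset u_0$, the hypotheses of the specific case transfer verbatim: $v_0 \in \mathcal{S}$, $l_{v_0} = 1 = l_{u_0}$ because horizontal lengths are scaled uniformly by $l_u$, and $|v_0|/|u_0| = |v|/|u| \leq 1/2$ by constancy of the Jacobian. The specific case then supplies $s_0 \in \mathcal{S}_{h_{u_0},\, 1/4}$ with $s_0 \subset \{ M_{v_0} \mathbb{1}_{u_0'} > \alpha \}$, where $\alpha$ denotes the density constant delivered there. Because $L$ preserves centers while stretching widths and lengths by the uniform factors $w_u$ and $l_u$, one checks that $L(h_{u_0}) = h_u$ and $|L(s_0)|/|h_u| = |s_0|/|h_{u_0}|$; hence $s := L(s_0)$ belongs to $\mathcal{S}_{h_u,\, 1/4}$.

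It remains to transport the maximal inequality through $L$. The decisive observation is that $L$ induces a bijection between the dilate-translate families $\{v_0\}^*$ and $\{v\}^*$: writing $\Vec{c} := L(0)$ and noting that $L_0(v_0) = v - \Vec{c}$, one computes
\[
L(\Vec{t}_0 + h v_0) = L_0(\Vec{t}_0) + (1-h)\Vec{c} + h v = \Vec{t} + h v
\]
for a suitable $\Vec{t} \in \mathbb{R}^2$, and every element of $\{v\}^*$ arises this way. A standard change of variables then yields $M_{v_0} \mathbb{1}_{u_0'}(x_0) = M_v \mathbb{1}_{u'}(L(x_0))$, which transports the inclusion $s_0 \subset \{ M_{v_0} \mathbb{1}_{u_0'} > \alpha \}$ into $s \subset \{ M_v \mathbb{1}_{u'} > \alpha \}$.

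The main obstacle is establishing the covariance of $M_v$ under $L$: although $L$ is not a similarity, involving distinct horizontal and vertical scalings together with a shear, the family of positive dilates and translates of the generating parallelogram $v$ is nevertheless stable under $L$, and the widened parallelogram $h_u$ together with the class $\mathcal{S}_{\cdot,\, \tau}$ are compatible with this affine structure. Once these compatibilities are verified, the proposition follows immediately from the specific case proven just above.
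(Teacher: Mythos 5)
Your proposal is correct and follows essentially the same route as the paper: the paper's own proof is a one-line reduction to the unit-square case via the (affine) change of variables carrying $u$ to $[0,1]^2$, exactly the map $L$ you construct, with the covariance of $M_v$, of $h_u$, and of the class $\mathcal{S}_{\cdot,\tau}$ left implicit. Your explicit verification of these compatibilities (and your caution about the density constant, where the paper's special case actually yields $\tfrac{1}{16}$ rather than the $\tfrac{1}{4}$ stated here) only fills in details the paper omits.
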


\begin{proof}
There is a unique linear function $f : \mathbb{R}^2 \rightarrow \mathbb{R}^2$ with positive determinant such that $f(u) = [0,1]^2$. Using this function and the previous lemma, the conclusion comes.
\end{proof}

\section{Proof of Theorem \ref{ T : main theorem} }\label{ S : proof thm main }

We fix an arbitrary family $\mathcal{B}$ contained in $\mathcal{T}$ and $1 < p < \infty$. We are going to prove that one has $$ B_p \times \log( \lambda_{[\mathcal{B}]} ) \leq \| M_{\mathcal{B}} \|_p^p.$$ To do so, we will prove that $M_\mathcal{B}$ admits a Kakeya-type set of level $$\left(\frac{1}{16}, C \log(n)^{-1}\right)\simeq \left(\frac{1}{2}, \log(n)^{-1}\right).$$

\subsection*{Strategy}

The family $\mathcal{B}$ generates a tree $[\mathcal{B}]$ ; we fix a fig tree $[\mathcal{F}] \subset [\mathcal{B}]$ of scale $\lambda_{[\mathcal{B}]}$ and we denote by $h \in \mathbb{N}$ its height.  Consider as before the random set $A$ associated to $[\mathcal{F}]$  $$A := \bigcup_{k \leq 2^h} \Vec{t}_k + (r_k \cup r_k').$$ We fix a realisation $\omega \in \Omega$ such that $|A_2(\omega)| \geq \frac{\log(n)}{C}|A_1(\omega)|$. We take advantage of $A_1:=A_1(\omega)$ but this time using  elements of $\mathcal{B}$ and not elements of $[\mathcal{F}]$.

%%%%%%%%%%%%%%%%%%%%%
%%%%%%%%%%%%%%%%%%%%%
%%%%%%%%%%%%%%%%%%%%%
%%%%%%%%%%%%%%%%%%%%%
%%%%%%%%%%%%%%%%%%%%%
%%%%%%%%%%%%%%%%%%%%%
%%%%%%%%%%%%%%%%%%%%%
%%%%%%%%%%%%%%%%%%%%%
%%%%%%%%%%%%%%%%%%%%%
\begin{figure}[h!]
\centering
\includegraphics[scale=0.9]{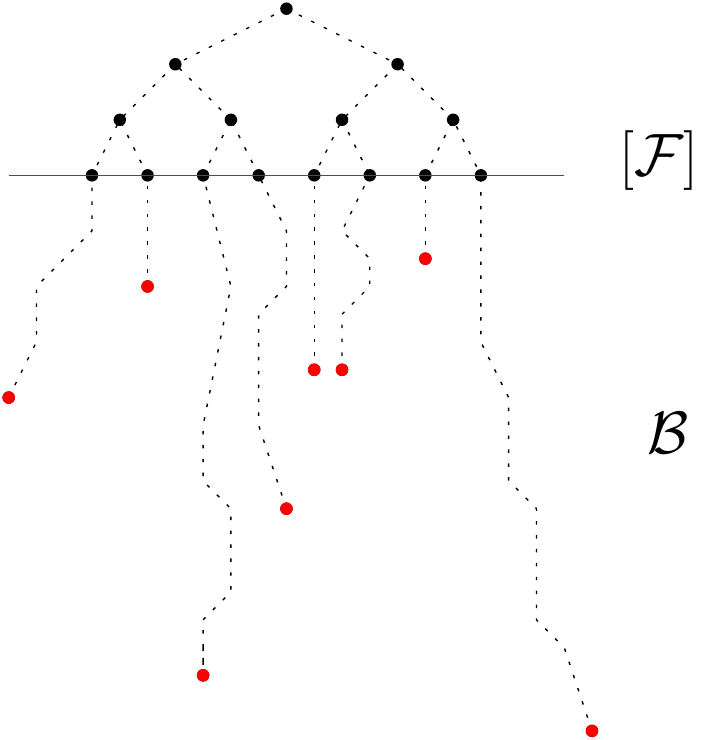}
\caption{Theorem \ref{ T : main theorem} shows that we can virtually use the tree $[\mathcal{F}]$ for the operator $M_\mathcal{B}$ even if $\mathcal{B}$ has no structure. On the illustration, $\mathcal{B}$ is composed of the red dots which represent rectangles who have very different scale and yet they \textit{interact} at the level of $[\mathcal{F}]$.}  
\end{figure}
%%%%%%%%%%%%%%%%%%%%%
%%%%%%%%%%%%%%%%%%%%%
%%%%%%%%%%%%%%%%%%%%%
%%%%%%%%%%%%%%%%%%%%%
%%%%%%%%%%%%%%%%%%%%%
%%%%%%%%%%%%%%%%%%%%%
%%%%%%%%%%%%%%%%%%%%%
%%%%%%%%%%%%%%%%%%%%%
%%%%%%%%%%%%%%%%%%%%%

\subsection*{Applying Proposition \ref{L : final lemma ge}}

For any $u \in \mathcal{L}_{[\mathcal{F}]}$ we fix an element $v_u$ of $\mathcal{B}$ such that $v_u \subset u$. To each pair $(u,v_u)$ we apply Proposition \ref{L : final lemma ge} and this gives a parallelogram $s_u \in \mathcal{S}_{h_u,\frac{1}{4}}$ such that $s_u \subset \left\{ M_{v_u}\mathbb{1}_{u'} > \frac{1}{16} \right\}$. We define then the set $B_2$ as $$B_2 := \bigcup_{ k \leq 2^h}  \Vec{t}_k + s_{r_k}' $$ Because $v_u \in \mathcal{B}$ we obviously have $M_{v_u} \leq M_\mathcal{B}$ and so  $s_u \subset \left\{ M_{\mathcal{B}}\mathbb{1}_{u'} > \frac{1}{16} \right\}$. Considering the union over $k \leq 2^h$ we obtain $$B_2 := \bigcup_{k \leq 2^h} \Vec{t}_k + s_{r_k}' \subset \left\{ M_{\mathcal{B}}\mathbb{1}_{A_1} > \frac{1}{16} \right\}$$ and so finally $|B_2| \leq \left|\left\{ M_{\mathcal{B}}\mathbb{1}_{A_1} > \frac{1}{16} \right\}\right|$.

\subsection*{Applying Proposition \ref{ prp : estimate geometric ; version décalé}}

It remains to compute $|B_2|$ ; to do so we observe that we can use proposition \ref{ prp : estimate geometric ; version décalé} with the families $\left\{ \Vec{t}_k + r_k' \right\}_{k \leq 2^h}$ and $\left\{  \Vec{t}_k + s_{r_k}' \right\}_{k \leq 2^h}$. This yields $$|B_2| \geq \frac{1}{21 \times 4}|A_2|$$ and so we finally have $$|A_1| \lesssim \frac{1}{\log(n)}\left| \left\{ M_{\mathcal{B}}\mathbb{1}_{A_1} > \frac{1}{16} \right\}\right|.$$ In other words, the set $A_1$ is a Kakeya-type set of level $\simeq (\frac{1}{2}, \log(n)^{-1})$ for the maximal operator $M_\mathcal{B}$ and this concludes the proof of Theorem \ref{ T : main theorem}.

\section{Proof of Theorem \ref{T:app1}}

Let $\Omega$ be a bad set of directions in $[0,\frac{\pi}{4})$ and let $\mathcal{B}$ be a rarefied basis of $\mathcal{R}_\Omega$ \textit{i.e.} we have $\mathcal{B} \subset \mathcal{R}_\Omega$ and also $$\sup_{ \omega \in \Omega} \inf_{ r \in \mathcal{B}, \omega_r = \omega} e_r  = 0.$$ Let's denote $\mathcal{T}_\Omega$ be the family associated to $\mathcal{R}_\Omega$ by Proposition \ref{ P : approx } ; observe now that our hypothesis implies that we have $[\mathcal{B}] = \mathcal{T}_\Omega$ and so in particular we have $$ \lambda_{[\mathcal{B}]} = \lambda_{ \mathcal{T}_\Omega}.$$ The following claim will concludes the proof.

\begin{claim}
If $\Omega$ is a bad set of directions then $\lambda_{ \mathcal{T}_\Omega} = \infty$.
\end{claim}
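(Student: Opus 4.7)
My plan is to prove the contrapositive: assume $\lambda_{\mathcal{T}_\Omega} = N < \infty$ and deduce that $\Omega$ is a good set of directions. By the classification recalled after Theorem \ref{ T : bateman } (Bateman \cite{BATEMAN}, together with the direct direction due to Alfonseca \cite{ALFONSECA}), it is enough to exhibit a decomposition of $\Omega$ into finitely many lacunary sets of finite order. This reduces the statement to a purely combinatorial fact about the tree $[\mathcal{T}_\Omega]$.

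The key sub-claim I would prove is the following: for any tree $[\mathcal{B}] \subset \mathcal{T}$ with $\lambda_{[\mathcal{B}]} \leq N$, the set of infinite paths of $[\mathcal{B}]$ -- read as a set of dyadic directions via $u_n(k) \leftrightarrow \tfrac{k}{2^n}\tfrac{\pi}{4}$ -- is contained in a union of at most $K(N)$ lacunary sets of order at most $N$. I would proceed by induction on $N$. The base case $N=0$ is transparent: the absence of a fig tree of scale $1$ inside $[\mathcal{B}]$ means no branching at all, so $[\mathcal{B}]$ is a single (possibly infinite) path and corresponds to a single direction. For the inductive step, the natural move is to select a maximal ``backbone'' path $\mathcal{P} \subset [\mathcal{B}]$ and consider the sibling sub-trees $\{T_j\}_j$ attached to $\mathcal{P}$. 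The structural observation is that these sub-trees cannot simultaneously carry too much analytic split: if some $T_j$ contained a fig tree of scale $N$, one could glue it to $\mathcal{P}$ together with the branching at its attachment node and further off-path children along $\mathcal{P}$ to manufacture a fig tree of scale $N+1$ inside $[\mathcal{B}]$, contradicting $\lambda_{[\mathcal{B}]}=N$. Hence each $T_j$ satisfies $\lambda_{T_j}\leq N-1$ and the induction hypothesis applies, while $\mathcal{P}$ itself contributes one additional lacunary set of order $\leq 1$.

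The main obstacle will be the combinatorial bookkeeping in the inductive step: making the backbone extraction canonical, bounding the number of sub-trees $T_j$ which carry a non-trivial contribution, and ensuring that $K(N)$ stays finite and depends only on $N$. A related subtlety is to match carefully the tree-theoretic notion of fig-tree scale with the sequential notion of ``lacunary of finite order'': these refer to essentially the same combinatorial pattern but in two different languages, and the translation requires grouping dyadic angles in an appropriate way (in the spirit of the reductions already used in \cite{BATEMAN} and \cite{ALFONSECA}). Once the sub-claim is established, applying it to $[\mathcal{T}_\Omega]$ yields the desired finite lacunary decomposition of $\Omega$; Alfonseca's theorem then gives that $M_\Omega$ is bounded on $L^p(\mathbb{R}^2)$ for every $p > 1$, so $\Omega$ is a good set of directions. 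This establishes the contrapositive and completes the proof.
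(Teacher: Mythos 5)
Your overall strategy --- proving the contrapositive by showing that a finite analytic split forces $\Omega$ into a finite union of lacunary sets of finite order, then invoking Alfonseca's theorem and Bateman's classification --- is the natural route; indeed the paper gives no argument for this claim at all and implicitly defers to exactly this equivalence from \cite{BATEMAN}. The genuine gap is in the key step of your induction. The assertion that a single subtree $T_j$ hanging off the backbone and containing a fig tree of scale $N$ can be ``glued to $\mathcal{P}$ together with the branching at its attachment node and further off-path children'' to produce a fig tree of scale $N+1$ is false: a fig tree of scale $N+1$ must have exactly $2^{N+1}$ boundary paths, \emph{all} with splitting number exactly $N+1$ and all of the same cardinality, whereas in your glued tree the boundary paths that run down the backbone or end at the adjoined off-path children have splitting number far below $N+1$. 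To create a fig tree of scale $N+1$ you really need \emph{two disjoint} scale-$N$ configurations sitting below the two children of a common vertex, together with an equalization of heights (which requires long enough paths below the leaves --- automatic in $\mathcal{T}_\Omega$, where every node lies on an infinite path, but not for the arbitrary trees of your sub-claim). Consequently the conclusion ``each $T_j$ satisfies $\lambda_{T_j}\leq N-1$'' is simply wrong: take for $[\mathcal{B}]$ an infinite path with one fig tree of scale $N$ attached at a single vertex; then $\lambda_{[\mathcal{B}]}=N$ while the attached subtree also has analytic split $N$.

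What your observation does give, once corrected, is only that at most one of the $T_j$ can have analytic split $N$ (two of them would lie below a common split along the backbone and, after padding heights, would combine into a scale-$(N+1)$ fig tree). With that weaker statement the induction as you set it up no longer closes: the exceptional subtree of split $N$ must itself be decomposed around a new backbone, and nothing in your scheme bounds the depth of this recursion, so the finiteness of $K(N)$ --- which you dismiss as ``bookkeeping'' --- is in fact the whole difficulty. This combinatorial lemma (bounded splitting implies a finite lacunary decomposition), together with the translation between fig-tree scale and lacunary order that you postpone, is precisely the nontrivial content carried out in \cite{BATEMAN} (building on \cite{ALFONSECA}); citing it, or reproducing its argument, is what the proof of the claim actually requires. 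So the plan is sound and matches the route the paper implicitly relies on, but as written the central inductive step does not work.
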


Applying Theorem \ref{ T : main theorem}, we obtain for any $1 < p < \infty$ $$ \infty = \lambda_{\mathcal{T}_\Omega} = \lambda_{[ \mathcal{B}]} \lesssim \|M_\mathcal{B}\|_p^p.$$

\section{Proof of Theorem \ref{T :app2}}

For $n \in \mathbb{N}^*$ recall that $r_n \in \mathcal{R}$ is a rectangle satisfying $$\left( e_{r_n}, \omega_{r_n} \right) = \left( \frac{1}{n} , \sin(n) \frac{\pi}{4} \right).$$ We let $\mathcal{B}$ be the basis generated by the rectangles $\{r_n \}_{ n \geq 1}$ ; we are going to prove that $$ \lambda_{[\mathcal{B}]} = \infty$$ and then apply Theorem \ref{ T : main theorem} to prove Theorem \ref{T :app2}. To begin with, observe that we have $$\lim_{n \infty} e_{r_n} = 0 $$ and also that $$\textbf{Adh}\left( \{ \omega_{r_n} , n \in \mathbb{N}^* \} \right) = [0,\frac{\pi}{4}]$$ where $\textbf{Adh}(E)$ denotes the topological adherence of the set $E$. Fix a large integer $H \gg 1$ and for $0 \leq k \leq 2^H-1$ let $$I_k := [ \frac{k}{2^H} \frac{\pi}{4}, \frac{k+1}{2^H} \frac{\pi}{4}].$$

\begin{claim}
For any $H \gg 1$ and any $ k \leq 2^H-1$ there exists $n \gg 1$ such that we have $\omega_{r_n} \in I_k$ and $e_{r_n} \leq \frac{1}{2^H}$.
\end{claim}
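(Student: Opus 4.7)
The two requirements are completely decoupled: the eccentricity bound $e_{r_n} = 1/n \leq 2^{-H}$ is purely a matter of taking $n \geq 2^H$, while the orientation requirement $\omega_{r_n} \in I_k$ reduces to finding some such $n$ with
\[
\sin(n) \in \left[ \frac{k}{2^H}, \frac{k+1}{2^H} \right].
\]
So the entire content of the claim is to show that the sequence $(\sin n)_{n \geq N}$, restricted to any initial threshold $N = 2^H$, still hits every subinterval of $[0,1]$ of length $2^{-H}$. The plan is therefore to reduce to density of $(n \bmod 2\pi)_{n \geq N}$ in $[0, 2\pi)$ and then invoke the continuity of $\sin$.

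\textbf{Key step.} I would apply Weyl's equidistribution theorem to $\alpha = 1/(2\pi)$. Since $\pi$ is irrational, so is $1/(2\pi)$, and hence the sequence $(n \alpha \bmod 1)_{n \in \mathbb{N}}$ is equidistributed in $[0,1)$; equivalently, $(n \bmod 2\pi)_{n \in \mathbb{N}}$ is equidistributed in $[0, 2\pi)$. Pick the unique subinterval $J \subset [0, \pi/2]$ on which $\sin$ maps bijectively and increasingly onto $[k/2^H, (k+1)/2^H]$; by the mean value theorem, $|J| \gtrsim 2^{-H} > 0$. Equidistribution then gives
\[
\lim_{N \to \infty} \frac{1}{N} \# \left\{ 1 \leq n \leq N : n \bmod 2\pi \in J \right\} = \frac{|J|}{2\pi} > 0,
\]
so infinitely many integers $n$ satisfy $n \bmod 2\pi \in J$. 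In particular there exists such an $n$ with $n \geq 2^H$, and for this $n$ we have simultaneously $\sin(n) \in [k/2^H, (k+1)/2^H]$ (hence $\omega_{r_n} \in I_k$) and $e_{r_n} = 1/n \leq 2^{-H}$.

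\textbf{Where the work is.} There is essentially no obstacle: the only nontrivial input is the irrationality of $\pi$, which gives equidistribution of $(n \bmod 2\pi)$ and therefore density of $(\sin n)_{n \geq N}$ in $[-1,1]$ for every $N$. This is precisely the mechanism that prevents the resonance between the eccentricity scale $1/n$ and the orientation scale $\sin(n) \pi/4$ from being too rigid: the orientations keep filling out $[0, \pi/4]$ even after the eccentricities have been forced arbitrarily small, which is exactly what is needed to produce a fig tree of arbitrarily large scale inside $[\mathcal{B}]$ and conclude $\lambda_{[\mathcal{B}]} = \infty$ via Theorem \ref{ T : main theorem}.
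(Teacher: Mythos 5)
Your proof is correct and follows the route the paper intends but leaves implicit (the paper only records that the eccentricities tend to $0$ and the orientations are dense in $[0,\pi/4]$, without writing out the argument): you decouple the two conditions and use irrationality of $\pi$ to place $\sin(n)$ in the prescribed dyadic subinterval. A useful refinement on your part is invoking Weyl equidistribution of $n \bmod 2\pi$ rather than mere density of the full sequence, since this directly yields \emph{infinitely many} admissible $n$ and hence one with $n \geq 2^H$, which is exactly the point where the eccentricity constraint $e_{r_n} = 1/n \leq 2^{-H}$ must be met simultaneously.
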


It easily follows by the claim that we have $\lambda_{[\mathcal{B}]} = \infty$ which concludes the proof of Theorem \ref{T :app2}.

{}

\end{document}